\DeclareMathAlphabet{\mymathbb}{U}{BOONDOX-ds}{m}{n}
\theoremstyle{plain}
\newtheorem{theorem}{Theorem}[section]
\newtheorem{claim}[theorem]{Claim}
\newtheorem{corollary}[theorem]{Corollary}
\newtheorem{lemma}[theorem]{Lemma}
\newtheorem{observation}[theorem]{Observation}
\newtheorem{question}[theorem]{Question}
\newtheorem{conjecture}[theorem]{Conjecture}
\newtheorem*{theorem*}{Theorem}
\newtheorem*{corollary*}{Corollary}
\theoremstyle{definition}
\title{Hindrance from a wasteful partial linkage}
\author{Attila Jo\'{o}}
\thanks{Funded by the Deutsche Forschungsgemeinschaft (DFG, German Research Foundation) - Grant No. 513023562 and partially by NKFIH 
OTKA-129211}
\address{Attila Jo\'{o},
Department of Mathematics, University of Hamburg, Bundesstra{\ss}e 55 (Geomatikum), 20146 Hamburg, Germany}
\email{attila.joo@uni-hamburg.de}
\keywords{hindrance, linkage, web}
\subjclass[2020]{Primary: 05C63  Secondary: 05C20, 05C40}
\begin{document}
\begin{abstract}
Let $ D=(V,E) $ be a (possibly infinite) digraph and $ A,B\subseteq V $. A hindrance consists of an $ AB $-separator $ S $ 
together with a set of disjoint $ AS $-paths linking a proper subset of $ A $ onto $ S $. Hindrances and configurations 
guaranteeing the existence of hindrances play an essential role in the proof of the infinite version of Menger's theorem and are 
important in the context of certain open problems as well. This motivates the investigation of circumstances under which 
hindrances appear. In this paper we show that if there is a ``wasteful partial linkage'', i.e. a set $ \mathcal{P} $ of disjoint $ 
AB $-paths with fewer unused vertices in $ B $ than in $ A $, then there exists a hindrance.
\end{abstract}
\maketitle

\section{Introduction}
Let $ G=(A,B, E) $ be a bipartite graph of any cardinality and suppose that our task is to find a matching that covers $ A $. Then a hindrance is 
a set $ X\subseteq A  $ for which the neighbourhood $ N_G(X) $ of $ X $ can be matched into a proper subset of $ X $. We 
call $ G $ hindered if such a set $ X $ exists. Note 
that if $ X $ is infinite, this does not preclude the existence of a matching covering $ A $. Indeed, for example if $ 
G $ is a 
complete bipartite graph with $ \left|A\right|=\left|B\right|=\aleph_0 $, then every infinite $ X\subseteq A $ is a hindrance 
although every injection from $ A $ to $ B $ provides a matching covering $ A $. A necessary and sufficient condition of this manner for the 
matchability of $ A $ 
was discovered by Aharoni \cite[Theorem 1.8]{aharoni2009menger}. For more results in infinite matching theory we refer to 
\cite{aharoni1991infinite}.

Hindrances can be defined in a more general setting. The triple $ \mathcal{W}=(D,A,B) $ is a web if $ D=(V,E) $ 
is a digraph of any cardinality and $ A,B\subseteq V $ where the vertices in $ A $ (in $ B $) have no 
ingoing 
(outgoing) edges. Suppose we are tasked with finding a linkage of $ A $ to $ B $ in $ D $, i.e. a set 
$ \mathcal{P} $ of disjoint $ AB $-paths such that the set $ \mathsf{in}(\mathcal{P}) $ of the initial vertices of $ 
\mathcal{P} $ is the whole $ 
A $. If 
there is an $ X\subseteq A $ such that a proper subset of $ X $ can be linked onto an $ XB $-separator $ S $; that is, there exists a 
set $ \mathcal{H} $ of disjoint $ XB $-paths with $ \mathsf{in}(\mathcal{H}) \subsetneq X $ and with terminal points $ 
\mathsf{ter}(\mathcal{H})=S $, then we call the web $ \mathcal{W} $ hindered. Note that adding the vertices in $ 
A\setminus 
X $ to $ \mathcal{H} $ as trivial paths, we obtain a set $ \mathcal{H}' $ of disjoint paths linking a proper subset of $ A $ 
onto 
an $ AB $-separator, namely  $S':=S\cup (A\setminus X )$. Such an $ \mathcal{H}' $ is called a hindrance. 
Hindrances play a key role in the proof of the infinite version of Menger's theorem (\cite[Theorem 1.6]{aharoni2009menger}) 
and they are also essential in 
the context 
of some open 
problems such as Matroid intersection conjecture \cite[Conjecture 1.2]{aharoni1998intersection}. This motivates us to 
investigate 
circumstances under which hindrances appear. 

A partial 
linkage $ \mathcal{P} 
$ in $ \mathcal{W} $ is a set of disjoint $ AB $-paths. We call $ \mathcal{P} $ wasteful if $ \left|A\setminus \mathsf{in}(\mathcal{P})\right|> 
\left|B\setminus \mathsf{ter}(\mathcal{P})\right| $. Note that if 
$ \left|A\right|>\left|B\right| $, then  every partial linkage is wasteful. The main result of this paper reads as follows:

\begin{theorem}\label{thm: main}
If a web $ \mathcal{W}=(D,A,B) $ admits a wasteful partial linkage $ \mathcal{P} $, then it is hindered.
\end{theorem}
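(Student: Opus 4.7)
The plan is to reduce to a partial linkage admitting no augmenting alternating path, and then to read off the hindrance from the set of vertices reachable from the unused $A$-vertices in the residual digraph. Given a partial linkage $\mathcal{Q}$, write $D^{\mathcal{Q}}$ for the digraph obtained from $D$ by reversing every edge lying on some path of $\mathcal{Q}$, and call a directed path in $D^{\mathcal{Q}}$ from $A\setminus\mathsf{in}(\mathcal{Q})$ to $B\setminus\mathsf{ter}(\mathcal{Q})$ a $\mathcal{Q}$-augmenting path. A standard exchange argument turns such a path into a new partial linkage $\mathcal{Q}'$ with one more vertex of $A$ in $\mathsf{in}$ and one more of $B$ in $\mathsf{ter}$, so wastefulness is preserved under augmentation.

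The first — and main — step is to produce a wasteful partial linkage $\mathcal{P}^*$ in which no $\mathcal{P}^*$-augmenting path exists. Each augmentation consumes exactly one fresh element from each of $U_A:=A\setminus\mathsf{in}$ and $U_B:=B\setminus\mathsf{ter}$; hence at most $|U_B|$ augmentations can ever be performed, and the hypothesis $|U_A|>|U_B|$ ensures $U_A^*\neq\emptyset$ at the end. When $|U_B|$ is finite the process halts in finitely many steps. For general infinite $|U_B|$ the iteration is transfinite (or is obtained from a Zorn argument on partial linkages ordered by inclusion of their traces $(\mathsf{in},\mathsf{ter})$), and I expect the main technical obstacle to lie in verifying that at limit stages the union of a chain of traces is again realised by some partial linkage — since augmentations reshuffle paths, a naive union of paths need not be a set of disjoint paths.

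Now assume $\mathcal{P}^*$ is as above and let $R$ be the set of vertices of $D$ reachable from $U_A^*$ inside $D^{\mathcal{P}^*}$; by the absence of augmenting paths, $R\cap U_B^*=\emptyset$. On each $P=v_0v_1\cdots v_k\in\mathcal{P}^*$ the reversed path edges $v_{i+1}\to v_i$ in $D^{\mathcal{P}^*}$ force $\{i:v_i\in R\}$ to be downward closed in $\{0,1,\dots,k\}$, and the fact that the unique incoming edge of $v_0$ in $D^{\mathcal{P}^*}$ is $v_1\to v_0$ gives $v_0\in R\iff v_1\in R$. Thus $P$ is of exactly one of three types: entirely in $R$, entirely in $V\setminus R$, or carrying a unique crossing edge $v_{i^*}\to v_{i^*+1}$ with $1\le i^*<k$, $\{v_0,\dots,v_{i^*}\}\subseteq R$ and $\{v_{i^*+1},\dots,v_k\}\cap R=\emptyset$. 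Moreover, any $D$-edge $u\to v$ with $u\in R$, $v\notin R$ must itself belong to $\mathcal{P}^*$ — otherwise it survives in $D^{\mathcal{P}^*}$ and forces $v\in R$ — and is hence the crossing edge of some type-(iii) path.

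Set $X:=A\cap R$ (so $U_A^*\subseteq X$), let $\mathcal{H}$ consist of the type-(i) paths of $\mathcal{P}^*$ together with the initial segment up to $r_P:=v_{i^*}$ of each type-(iii) path, and put $S:=\mathsf{ter}(\mathcal{H})$. Then $\mathcal{H}$ is a set of disjoint paths starting in $X$ with terminals in $S$, and $\mathsf{in}(\mathcal{H})=X\setminus U_A^*\subsetneq X$ because $U_A^*\neq\emptyset$. Finally, $S$ is an $XB$-separator: any $X$-to-$B$ path $Q$ in $D$ either terminates in $R\cap B$, which by the structural analysis is contained in the terminal set of some type-(i) path and hence in $S$, or has a first edge leaving $R$, which by the last observation above is the crossing edge of some type-(iii) path and so exits $R$ through a vertex of $S$. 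This yields the required hindrance.
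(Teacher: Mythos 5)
Your closing step---reading off the hindrance from the set $R$ of vertices reachable from $\widehat{A}$ in the residual digraph once no augmentation is possible---is sound and is essentially Lemma~\ref{lem: no aug hind} of the paper, and your finite case (finitely many augmentations when $\left|\widehat{B}\right|<\aleph_0$) matches the paper's induction. The fatal gap is precisely the point you flag and then defer: when $\widehat{B}$ is infinite there is in general no way to reach a partial linkage $\mathcal{P}^{*}$ with no augmenting path, because the limit stages fail. The traces $(\mathsf{in}(\mathcal{Q}_\alpha),\mathsf{ter}(\mathcal{Q}_\alpha))$ do form an increasing chain, but an increasing chain of linkable trace-pairs need not have a linkable union, so neither the transfinite recursion nor the Zorn argument has upper bounds at limit steps. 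Concretely, take $A=\{a_n:n<\omega\}$, $B=\{b_n:n<\omega\}$ with edges $a_0b_n$ and $a_{n+1}b_n$ for all $n$: starting from $\{a_0b_0\}$ and augmenting with $a_1,a_2,\dots$ successively produces linkages with traces $(\{a_0,\dots,a_n\},\{b_0,\dots,b_n\})$, yet $A$ cannot be linked into $B$ at all (each $a_{n+1}$ forces the edge $a_{n+1}b_n$, exhausting $B$ and stranding $a_0$); the path of $a_0$ ``escapes to infinity'' and no limit linkage exists. This is not a routine verification you can postpone---it is the central difficulty of the subject, and the paper's entire apparatus (popular/unpopular vertices, the $\omega$-step modification of the web via $B_n$, $D_n$, $\mathcal{P}_n$, and the limit web $\mathcal{L}_\omega$) exists to avoid ever taking a limit of linkages: instead of augmenting to completion, the paper shrinks $B$ toward $A$ until every remaining unused target is popular, at which point $\kappa$ many strongly disjoint augmenting trails can be realised simultaneously in a single application of Lemma~\ref{lem: augmenting mass} (via Lemma~\ref{lem: everybody popular}).

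A secondary problem: for vertex-disjoint paths, simple directed paths in your residual digraph $D^{\mathcal{Q}}$ are not the correct augmenting structure. Such a path may enter and leave an interior vertex $v$ of some $P\in\mathcal{Q}$ by two forward edges, and the exchange then routes two of the new paths through $v$; the correct objects are the alternating walks of the paper, which upon reaching $v\in V(\mathcal{P})$ by a forward edge must continue along the reversed path edge (condition~(\ref{item go back rule})) and are therefore trails rather than paths. If you repair the augmentation by switching to these walks, then ``no augmenting walk'' no longer yields $R\cap\widehat{B}=\emptyset$ for your path-based reachability, and the separator must instead be defined from the walk-reachable set as in Lemma~\ref{lem: no aug hind}. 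This part is fixable, but the limit-stage gap above is not fixable by these means and is where the actual content of the theorem lies.
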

The proof is based on a natural adaptation of the techniques used in the proof of the infinite version of König's theorem
(see \cite{aharoni1984konig}). We derive as a consequence 
the following statement:

\begin{corollary}\label{cor: main corollary}
If a bipartite graph $ G=(A,B,E) $ admits a matching $ M $ with $ \left|A\setminus V(M)\right|>\left|B\setminus V(M)\right| 
$, then it is hindered.
\end{corollary}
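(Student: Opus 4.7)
The plan is to reduce to Theorem~\ref{thm: main} by encoding the bipartite graph $G$ as a web. I orient every edge of $G$ from $A$ to $B$, producing a digraph $D$ on vertex set $A\cup B$ in which vertices of $A$ have no ingoing arcs and vertices of $B$ have no outgoing arcs; hence $\mathcal{W}:=(D,A,B)$ is a web. The crucial feature of this orientation is that every $XB$-path in $D$ for $X\subseteq A$ consists of a single arc, so partial linkages in $\mathcal{W}$ are literally matchings in $G$, and the wastefulness condition for $M$ translates verbatim into the hypothesis of the corollary.

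Applying Theorem~\ref{thm: main} to $\mathcal{W}$ furnishes a set $X\subseteq A$, an $XB$-separator $S$ in $D$, and a set $\mathcal{H}$ of disjoint $XB$-paths with $\mathsf{in}(\mathcal{H})\subsetneq X$ and $\mathsf{ter}(\mathcal{H})=S$. Since $S=\mathsf{ter}(\mathcal{H})\subseteq B$ and every $XB$-path in $D$ is a single arc going from $X$ into $N_G(X)$, the separator condition forces $N_G(X)\subseteq S$. Moreover $\mathcal{H}$ itself is (via the same correspondence) a matching of $G$ between $\mathsf{in}(\mathcal{H})$ and $S$. Restricting this matching to the edges whose $B$-endpoint lies in $N_G(X)$ then matches $N_G(X)$ into $\mathsf{in}(\mathcal{H})\subsetneq X$, showing that $X$ witnesses the hinderedness of $G$.

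I do not anticipate any serious obstacle: the argument is a routine translation between the bipartite and web definitions of \emph{hindered} through the canonical $A\to B$ orientation. The only point requiring a moment of care is the implication that an $XB$-separator $S\subseteq B$ in $D$ must contain $N_G(X)$, which uses only that in $D$ no $XB$-walk can bypass an arc, so the $B$-endpoint of each edge from $X$ must itself lie in $S$.
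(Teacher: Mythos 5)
Your proof is correct and follows essentially the same route as the paper: orient all edges of $G$ towards $B$, apply Theorem~\ref{thm: main}, and translate the resulting hindrance of the web back into a bipartite hindrance via the observation that every $XB$-path is a single arc. The only cosmetic difference is that you invoke the $X$-formulation of hinderedness from the introduction directly, whereas the paper starts from a hindrance $\mathcal{H}$ in the sense of the Notation section (disjoint $AS$-paths onto an $AB$-separator $S$) and recovers $X:=\mathsf{in}(\mathcal{H}')\cup (A\setminus V(\mathcal{H}))$ by discarding the trivial paths.
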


The paper is organized as follows. We introduce the necessary notation in the following section. In Section 
\ref{sec: proof}, we prove 
Theorem \ref{thm: main} and Corollary \ref{cor: main corollary}. In the last section (Section \ref{sec: outlook}), we 
conjecture a matroidal version of  
Theorem \ref{thm: main} and a possible simplification of the proof of the infinite version of Menger's theorem.

\section{Notation}
Ordered pairs $ \left\langle u, v\right\rangle $ in the context of directed edges are written as $ uv $. For a cardinal $ \kappa 
$ a $ \kappa 
$\emph{-set} 
is a set of cardinality $ \kappa $. We write $ \omega $ for the set of  natural numbers. A digraph is an ordered pair $ D=(V,E) $ 
where $ E \subseteq (V\times 
V)\setminus \{vv:\ v\in V \} $ and $ uv\in E $ implies $ vu\notin E $.\footnote{This restriction of digraphs is just for 
convenience. Subdividing each edge by a new vertex makes digraphs simple, while the existence of a wasteful linkage and of 
a hindrance are both invariant under this operation.} Fix a digraph $ D=(V,E) $. The \emph{in-neighbourhood} $ 
\boldsymbol{N^{-}(U)} $ of a set $ U\subseteq V $ consists 
of those $ v\in 
V\setminus U $ 
for which there is a $ w\in U $ with $ vw\in E $.  

A trivial  
\emph{trail} is a single vertex $ v\in V $. A non-trivial trail is a finite, nonempty sequence $T= (v_0v_1, 
 v_1v_2, \dots v_{n-1}v_n) $  where $ v_iv_{i+1}\in E $ and $ v_iv_{i+1}\neq v_jv_{j+1} $ for $ i\neq j $. We write $ 
 \boldsymbol{\mathsf{in}(T)} $ for the initial vertex $ v_0 $ and $ \boldsymbol{\mathsf{ter}(T)} $ for the terminal vertex $ 
 v_n $ of the trail  (while $ \mathsf{in}(T)=\mathsf{ter}(T)=v $ if $ T=v $ is a trivial trail). 
 Furthermore, $ V(T) 
 $ and $ E(T)$ stand for the vertex set and edge set of 
 the subgraph corresponding to $ T $.  We say that $ T $ is an $ XY $-trail if it is either trivial and an element of $ X\cap Y $ 
 or non-trivial with $ v_0\in X $, $ v_n\in Y $ and $ v_i \notin 
 X\cup Y $ for $ 0<i<n $.  A \emph{path} $ P $ is a trail that is either trivial or non-trivial and has the additional property of not repeating vertices, 
 i.e.  $ v_i\neq v_j $ 
 for $ i\neq j $.    For a set $ 
\mathcal{T} $ of trails we let $ \boldsymbol{\mathsf{in}(\mathcal{T})}:=\{ \mathsf{in}(T):\ T\in \mathcal{T} \} $ and $ 
\boldsymbol{\mathsf{ter}(\mathcal{T})}:=\{ 
\mathsf{ter}(T):\ T\in \mathcal{T} \} $, moreover $ \boldsymbol{V(\mathcal{T})}=\bigcup_{T\in \mathcal{T}}V(T) $  and $ 
\boldsymbol{E(\mathcal{T})}=\bigcup_{T\in \mathcal{T}}E(T) $. We call a set $ \mathcal{T} $ of trails \emph{disjoint} if $ V(R)\cap 
V(T)=\emptyset  $ for every distinct $ R, T\in \mathcal{T} $ and  $ v $\emph{-joint} if $ \mathsf{ter}(T)=v $ 
for each $ T\in 
\mathcal{T} $ and $ V(R)\cap V(T)=\{ v \} $ for every distinct $ R, T\in \mathcal{T} $. A set $ \mathcal{P} $ of disjoint 
paths \emph{links} $ X $ \emph{onto} $ Y $ if $ \mathsf{in}(\mathcal{P})=X $ and $ \mathsf{ter}(\mathcal{P})=Y $.

 A \emph{web} is a triple $\boldsymbol{ \mathcal{W}}=(D,A,B) $ where $ D=(V,E) $ is a digraph,  $ A,B\subseteq V $ 
 and the vertices in $ A $ ($ B $) have no ingoing (outgoing) edges. 
 An $ XY $-\emph{separator}  is an $ S\subseteq V $ that meets all $ XY $-paths. A \emph{hindrance}  is a set $ 
  \mathcal{H} $ of disjoint $ AS $-paths where $ S 
 $ is an $ AB $-separator and $ \mathcal{H} $ links a proper 
 subset of $ A $ onto $ S $.
 A 
 \emph{partial linkage} of $ \mathcal{W} $ is a set $ \mathcal{P} $ of disjoint  $ AB $-paths. 
 
 A \emph{partially linked 
 web} is a 
 quadruple $ 
 \boldsymbol{\mathcal{L}}=(D,A,B, \mathcal{P}) $ where $\mathcal{W}= (D,A,B) $ is a web and $ \mathcal{P} $ is a 
 partial linkage in it. We write $ 
\boldsymbol{ \widehat{A}} $ and $ \boldsymbol{\widehat{B}} $ for $ A \setminus \mathsf{in}(\mathcal{P}) $ and $ B 
\setminus 
 \mathsf{ter}(\mathcal{P}) $ respectively (we add $ \mathcal{L} $ as subscript if it is not clear from the context).   
 
 The \emph{alternating trails} in $ \mathcal{L} $ consist of the trivial trails corresponding to the elements of $ \widehat{A} 
 $ as well as those
 $T= (v_0v_1, 
 v_1v_2, \dots v_{n-1}v_n) $ for which

 \begin{enumerate}
 \item\label{item: reversed edges} $ T $ is a non-trivial trail in the digraph $ \boldsymbol{D^{*}}=(V,\boldsymbol{E^{*}}) 
 $ we 
  obtain from $ D $ by reversing 
  the edges in $ E(\mathcal{P})$, 
 \item\label{item: start in A hat}  $ v_0\in \widehat{A} $,
 \item\label{item: repeat vertex} if $ v_i=v_j $ for $ i< j $, then $ v_i\in V(\mathcal{P}) $ and $ j\neq n $,
 \item\label{item go back rule} if $ v_{i}\in V(\mathcal{P}) $,   $v_{i-1}v_{i}\in E^{*}\cap E $ and  $ i<n $, then 
 $ 
 v_{i}v_{i+1}\in E^{*}\setminus E $. 
 \end{enumerate}
(Property (\ref{item go back rule}) states that upon arriving at a vertex $ v\in V(\mathcal{P}) $ by using a ``forward'' edge implies that the 
next edge we use must be the unique ``backward'' edge from $ v $ unless we terminate at $ v $.)
 
We say that the alternating trails $ R $ and $ T $ are \emph{strongly disjoint} if they are disjoint and there is no $ P\in 
\mathcal{P} $ 
such that $ V(R)\cap V(P)\neq \emptyset  $ and $ V(T)\cap V(P)\neq \emptyset  $.  An alternating trail is \emph{augmenting} 
if it terminates in $ \widehat{B} $.    Let $ 
\boldsymbol{O} $ consist 
 of those $ v\in \widehat{B} $ for which there exists an $ 
\left|\widehat{A}\right|$-set  $ \mathcal{T} $ of $ v $-joint augmenting trails and let 
$\boldsymbol{U}:=\widehat{B}\setminus 
O $. We call these the \emph{popular} and \emph{unpopular} vertices respectively.

\section{Proof of Theorem \ref{thm: main}}\label{sec: proof}
Fix a partially linked web $ \mathcal{L}=(D,A,B,\mathcal{P}) $  and let  $ \mathcal{W}:=(D,A,B) $. 
\subsection{Preparations}
\begin{observation}\label{obs: dis to strongly dis}
For every infinite cardinal $ \kappa $, every $ \kappa $-set of disjoint alternating trails includes a $ \kappa $-subset of 
strongly 
disjoint alternating trails.
\end{observation}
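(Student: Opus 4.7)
The plan is to recast the problem as independent-set extraction in an auxiliary graph. On vertex set $\mathcal{F}$ (the given $\kappa$-set of disjoint alternating trails) I would form a graph $H$ in which $R \sim T$ iff some $P \in \mathcal{P}$ satisfies $V(P) \cap V(R) \neq \emptyset \neq V(P) \cap V(T)$. Since vertex-disjointness is already built into $\mathcal{F}$, a subfamily is strongly disjoint precisely when it is independent in $H$, so the task reduces to producing a $\kappa$-independent set in $H$.

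The key step is to verify that $H$ is locally finite. By the paper's definition, every alternating trail is a finite sequence, hence $V(T)$ is finite for each $T \in \mathcal{F}$, so $T$ meets only finitely many paths of $\mathcal{P}$. In the other direction, a fixed $P \in \mathcal{P}$ is itself a finite path, and the trails in $\mathcal{F}$ are mutually vertex-disjoint; therefore at most $|V(P)|$ of them can meet $P$. Composing the two bounds, every $T \in \mathcal{F}$ has finite $H$-degree.

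Next, I would invoke the standard observation that a locally finite graph on $\kappa$ vertices ($\kappa$ an infinite cardinal) admits an independent set of size $\kappa$. Its connected components are at most countable, since a breadth-first exploration from any vertex visits only countably many vertices. If $\kappa$ is uncountable, then there must be $\kappa$ components (countably many vertices cover only $\aleph_0 < \kappa$ of them), and any transversal through the components furnishes the desired independent $\kappa$-set. If $\kappa = \aleph_0$, then either $H$ has infinitely many components (take a transversal) or some component of $H$ is infinite, in which case a greedy procedure — pick a vertex, delete its finite closed neighbourhood, repeat — yields a countable independent set inside that component.

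I do not expect any serious obstacle here; the statement is essentially a pigeonhole observation once the two finiteness properties (trails are finite and hit few paths; paths are finite and are hit by few disjoint trails) are made explicit. The only minor point requiring care is the cardinality count when $\kappa$ is singular, which is bypassed uniformly by routing the argument through components rather than through a direct greedy well-ordering of $\mathcal{F}$.
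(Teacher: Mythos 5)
Your proof is correct, but it takes a different route from the paper. The paper performs a direct transfinite recursion of length $\kappa$ on the given family $\mathcal{T}$: at stage $\alpha$ the already-chosen trails meet at most $\left|\alpha\right|\cdot\aleph_0<\kappa$ paths of $\mathcal{P}$, and since each such path is finite and the trails of $\mathcal{T}$ are disjoint, fewer than $\kappa$ members of $\mathcal{T}$ are thereby excluded, so a fresh trail can be chosen. You instead isolate the same two finiteness facts (each finite trail meets finitely many paths of $\mathcal{P}$; each finite path of $\mathcal{P}$ meets finitely many of the pairwise disjoint trails) to show that the auxiliary conflict graph $H$ is locally finite, and then quote the standard fact that a locally finite graph on $\kappa$ vertices has a $\kappa$-sized independent set, proved via the countability of components. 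Both arguments are sound for all infinite $\kappa$, including singular ones (in the paper's recursion the excluded set still has size $\max(\left|\alpha\right|,\aleph_0)<\kappa$, so regularity is not actually needed); your version buys a cleanly reusable lemma and replaces the well-ordering bookkeeping by a component count, at the cost of introducing the auxiliary graph. One small point worth making explicit in your write-up: adjacency in $H$ should be taken to encode only the \emph{additional} obstruction beyond disjointness (a shared path of $\mathcal{P}$), which is exactly what you do, and this is legitimate precisely because the input family is already pairwise vertex-disjoint.
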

\begin{proof}
The desired $ \kappa $-subset of strongly 
disjoint alternating trails can be built by a straightforward transfinite recursion. Indeed, let $ \mathcal{T} $ be a $ \kappa $-set of disjoint 
alternating trails.  In step $ \alpha<\kappa $ we have already chosen strongly 
disjoint alternating trails $ \mathcal{T}_\alpha=\{ T_\beta:\ \beta<\alpha \} $. The trails in $ \mathcal{T}_\alpha $ meet at most 
$ \left|\alpha\right| \cdot \aleph_0<\kappa $ paths in $ \mathcal{P} $. Since $ \left|\mathcal{T}\right|=\kappa $ and the trails in $ \mathcal{T} $ are 
disjoint, we can pick a $ T_\alpha $ from $ \mathcal{T} $ that is disjoint from all of these paths as well as from all the trails 
in $ \mathcal{T}_\alpha $. 
\end{proof}

The proof of Menger's theorem  based on augmenting trails (sometimes called augmenting walks)  is well known, so we do 
not 
provide detailed proofs for the following two lemmas. For detailed proofs in the context of undirected graphs see 
\cite[Lemma 3.3.2 and 3.3.3]{diestel2017graph}. \footnote{For the directed version we could not find a suitable 
reference. Textbooks are typically reducing vertex-Menger to edge-Menger where the corresponding 
``augmenting path 
lemmas'' are simpler. }
\begin{lemma}\label{lem: no aug hind}
If  $ \widehat{A}\neq \emptyset  $ and there is no augmenting trail, then $ \mathcal{W} $ is hindered. 
\end{lemma}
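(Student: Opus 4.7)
The plan is to mimic the standard alternating-trail argument. Define $R\subseteq V$ to be the set of vertices that arise as the terminal of some alternating trail in $\mathcal{L}$; the hypothesis that no augmenting trail exists translates to $R\cap \widehat{B}=\emptyset$, while $\widehat{A}\subseteq R$ via the trivial alternating trails. The hindrance will be produced by truncating each $P\in\mathcal{P}$ at a suitable vertex $s_P\in V(P)$ and taking $S:=\{s_P:P\in\mathcal{P}\}$ together with $\mathcal{H}$ consisting of the subpath of each $P$ from $\mathsf{in}(P)$ to $s_P$.

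First I would establish two closure properties of $R$ by appending single edges to alternating trails and checking conditions (\ref{item: reversed edges})--(\ref{item go back rule}). (A) If $u\in R\cap V(P)$ with $u\neq \mathsf{in}(P)$ for some $P\in\mathcal{P}$, then the $P$-predecessor of $u$ is in $R$: append the reversed $\mathcal{P}$-edge leaving $u$, and if the new vertex was already visited by the trail, take a prefix instead. (B) If $u\in R\setminus V(\mathcal{P})$ and $uv\in E$, then $v\in R$: since $u\notin V(\mathcal{P})$, condition (\ref{item: repeat vertex}) forces $u$ to appear on the trail only as its terminal, so $uv$ can be safely appended (falling back on a prefix if $v$ already lies on the trail). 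Iterating (A) shows that $V(P)\cap R$ is an initial segment of $P$ for every $P\in\mathcal{P}$; in particular, $V(P)\cap R\neq\emptyset$ implies $\mathsf{in}(P)\in R$. Define $s_P$ to be the last element of $V(P)\cap R$ if this set is nonempty, and $s_P:=\mathsf{in}(P)$ otherwise. The family $\mathcal{H}$ is pairwise disjoint (inherited from $\mathcal{P}$), each member is an $AS$-path (its interior avoids $A$ by the web convention and avoids $S$ by disjointness of $\mathcal{P}$), and it links $\mathsf{in}(\mathcal{P})=A\setminus \widehat{A}$ onto $S$, with $\widehat{A}\neq\emptyset$ ensuring $\mathsf{in}(\mathcal{H})\subsetneq A$.

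It remains to show $S$ is an $AB$-separator. For an $AB$-path $Q=(u_0,\ldots,u_k)$, if $u_0=\mathsf{in}(P_0)$ with $V(P_0)\cap R=\emptyset$, then $u_0=s_{P_0}\in V(Q)\cap S$; otherwise $u_0\in R$ (either because $u_0\in \widehat{A}$ or by (A) iterated). Let $i$ be the largest index with $u_i\in R$. If $i=k$ then $u_k\in R\cap B\subseteq \mathsf{ter}(\mathcal{P})$, and $u_k$ is the terminal of some $P_1\in\mathcal{P}$, so $s_{P_1}=u_k\in V(Q)\cap S$. If $i<k$, then (B) forces $u_i\in V(\mathcal{P})$, say on $P'$. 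If $u_i$ is the last $R$-vertex of $P'$, then $s_{P'}=u_i\in V(Q)\cap S$; otherwise the $P'$-successor $u_i^+$ of $u_i$ lies in $R$, and I would derive a contradiction to $u_{i+1}\notin R$: if $u_iu_{i+1}\in E(\mathcal{P})$ then $u_{i+1}=u_i^+\in R$, and otherwise, taking a minimum-length alternating trail $T^+$ to $u_i^+$ and extending it by the backward edge $u_i^+u_i$ and the forward edge $u_iu_{i+1}$ produces an alternating trail to $u_{i+1}$.

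The main obstacle will be verifying the legality of this last two-edge extension. Minimality of $T^+$ excludes a prior use of the edge $u_i^+u_i$; condition (\ref{item go back rule}) at $u_i^+$ is met because the appended edge is precisely the backward one the rule would demand after a forward arrival, while at $u_i$ the arrival is backward, so no further restriction applies. Condition (\ref{item: repeat vertex}) is the most delicate point: $u_i$ may already occur in $T^+$, but because $u_i\in V(\mathcal{P})$ and its new occurrence is not the terminal of the extended trail, the rule permits this; and if $u_{i+1}$ itself has already appeared in $T^+$, then a prefix of $T^+$ directly witnesses $u_{i+1}\in R$, again giving the required contradiction.
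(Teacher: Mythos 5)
Your proposal is correct and follows essentially the same route as the paper: your $s_P$ and $S$ are exactly the paper's $v_P$ and separator $S$, and you supply in detail the closure/extension arguments that the paper delegates to \cite[Lemmas 3.3.2 and 3.3.3]{diestel2017graph}. The delicate points (the two-edge extension through $u_i^+$ and the prefix fallbacks for conditions (\ref{item: repeat vertex}) and (\ref{item go back rule})) are all handled correctly.
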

\begin{proof}
For each $ P\in \mathcal{P} $, let $ v_P $ be the last vertex on $ P $ for which there is an alternating trail $ T $ with $ 
\mathsf{ter}(T)=v_P $ and let $ v_P:=\mathsf{in}(P) $ if no alternating trail meets $ P $. One 
can show (see \cite[Lemma 3.3.3]{diestel2017graph}) that $ S:=\{ v_P:\ P\in \mathcal{P} \} $ is an 
$ 
AB $-separator. The initial segment 
of  each $ P\in \mathcal{P} $ up to $ v_P $   provides a hindrance. 
\end{proof}

\begin{lemma}\label{lem: augmenting mass}
If $ \mathcal{T} $ is a set of strongly disjoint augmenting trails, then there is a partial linkage $ \mathcal{Q} $ with $ 
\mathsf{in}(\mathcal{Q})=\mathsf{in}(\mathcal{P}) \cup 
\mathsf{in}(\mathcal{T})$ and $ \mathsf{ter}(\mathcal{Q})=\mathsf{ter}(\mathcal{P}) \cup \mathsf{ter}(\mathcal{T})$.
\end{lemma}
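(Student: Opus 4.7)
The plan is to construct $\mathcal{Q}$ by simultaneously augmenting $\mathcal{P}$ along all trails in $\mathcal{T}$ via a symmetric-difference-type edge exchange.  Let
\[ R:=\{uv\in E(\mathcal{P}):vu\in E(\mathcal{T})\} \quad\text{and}\quad N:=E(\mathcal{T})\cap E, \]
so $R$ consists of those $\mathcal{P}$-edges that some trail traverses backwards, and $N$ of the ``forward'' trail edges.  Define
\[ F:=(E(\mathcal{P})\setminus R)\cup N, \]
and let $H$ be the spanning subdigraph of $D$ with edge set $F$.  My aim is to show that $H$ decomposes into the required set $\mathcal{Q}$ of pairwise disjoint $AB$-paths, whose initial and terminal vertex sets are $\mathsf{in}(\mathcal{P})\cup\mathsf{in}(\mathcal{T})$ and $\mathsf{ter}(\mathcal{P})\cup\mathsf{ter}(\mathcal{T})$ respectively.

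The first step is a local degree analysis at each vertex $v$.  If $v\notin V(\mathcal{P})\cup V(\mathcal{T})$ then no $F$-edge is incident to $v$.  If $v\in V(\mathcal{P})\setminus V(\mathcal{T})$ then no edge of $R\cup N$ is incident to $v$, so $H$ agrees with $\mathcal{P}$ at $v$.  If $v\in V(\mathcal{T})\setminus V(\mathcal{P})$, then disjointness of $\mathcal{T}$ together with property~\ref{item: repeat vertex} gives a unique trail visiting $v$ exactly once, with its two incident edges (if any) lying in $N\subseteq F$.  The main obstacle is the case $v\in V(\mathcal{P})\cap V(\mathcal{T})$: strong disjointness yields a unique $T\in\mathcal{T}$ meeting $v$, but property~\ref{item: repeat vertex} allows $T$ to visit $v$ several times.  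A case analysis of each local passage of $T$ through $v$---using property~\ref{item go back rule} to rule out the forward-in/forward-out pattern---shows that each forward trail edge added at $v$ is compensated by a $\mathcal{P}$-edge removed at $v$, so the in- and out-degrees of $H$ at $v$ each remain at most $1$.

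The local analysis simultaneously identifies the in-degree-$0$ vertices of $H$ as $\mathsf{in}(\mathcal{P})\cup\mathsf{in}(\mathcal{T})$ and the out-degree-$0$ vertices as $\mathsf{ter}(\mathcal{P})\cup\mathsf{ter}(\mathcal{T})$.  Strong disjointness then implies that each weakly connected region of $H$ meets at most one trail $T\in\mathcal{T}$, together with the $\mathcal{P}$-paths met by $T$; since trails and $\mathcal{P}$-paths are finite, every such region is finite, so $H$ is a disjoint union of directed paths and cycles.  Cycles are excluded because, by the identification of sources and sinks together with the region structure, each region contains either the initial vertex of a participating trail or an endpoint of a participating $\mathcal{P}$-path, whereas a cycle would have every vertex of in- and out-degree $1$.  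Therefore $H$ is exactly the edge set of a partial linkage $\mathcal{Q}$ with the required initial and terminal vertex sets.
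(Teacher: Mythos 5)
Your global symmetric-difference construction and the local degree analysis are essentially sound: every vertex of $H$ has in- and out-degree at most $1$, and the vertices of in-degree $0$ and out-degree $\geq 1$ are exactly the non-trivial members of $\mathsf{in}(\mathcal{P})\cup\mathsf{in}(\mathcal{T})$ (dually for sinks). The genuine gap is the final step: cycles are \emph{not} excluded, and the assertion that a cycle is ruled out because each region contains a source or a sink does not hold, since a cycle can live inside a region \emph{alongside} the paths. Concretely, let $A=\{a,a'\}$, $B=\{b,\beta\}$, let $\mathcal{P}=\{P\}$ where $P$ is the path $a\to x_1\to x_2\to x_3\to b$, and let $E$ additionally contain $x_3x_1$, $a'b$ and $a\beta$. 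Then
\[ T=(a'b,\ bx_3,\ x_3x_1,\ x_1a,\ a\beta) \]
is an augmenting trail: it enters $b$ forward, leaves backward to $x_3$ as (\ref{item go back rule}) demands, goes forward along $x_3x_1$, leaves $x_1$ backward to $a$, and exits forward to $\beta\in\widehat{B}$. Here $R=\{ax_1,x_3b\}$ and $N=\{a'b,x_3x_1,a\beta\}$, so
\[ F=\{x_1x_2,\ x_2x_3,\ x_3x_1,\ a'b,\ a\beta\}, \]
and $H$ consists of the two paths $a\to\beta$ and $a'\to b$ \emph{together with the directed cycle} $x_1\to x_2\to x_3\to x_1$, all of whose vertices have in- and out-degree $1$. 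So $H$ is not the edge set of a partial linkage.

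The repair is short and needs no new idea: since the components of $H$ are finite and your degree analysis already pins down the sources and sinks, the \emph{path components} of $H$ (plus the trivial paths coming from trivial members of $\mathcal{P}$ and $\mathcal{T}$) already form the desired $\mathcal{Q}$; one simply discards the cycles, as well as the isolated vertices arising from internal $\mathcal{P}$-vertices through which a trail passes backward in both directions (these also have in-degree $0$, so your identification of the in-degree-$0$ vertices needs the caveat ``with positive out-degree''). This is consistent with the paper's own route, which invokes the single-trail rerouting lemma of \cite[Lemma 3.3.2]{diestel2017graph} with the containment $V(\mathcal{Q}_T)\subseteq V(\mathcal{P}_T)\cup V(T)$ rather than equality, precisely because vertices of $V(\mathcal{P}_T)$ may be lost; strong disjointness then lets the local reroutings be performed independently. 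With the cycle issue fixed, your argument is a legitimate, more self-contained alternative to that citation.
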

\begin{proof}
For $ T\in \mathcal{T} $, let $ \mathcal{P}_T $ be the set of the finitely many paths in $ \mathcal{P} $ meeting $ T $. 
Then one can 
find (see \cite[Lemma 3.3.2]{diestel2017graph}) a set $ \mathcal{Q}_T $ of $ \left|\mathcal{P}_T\right|+1 $ many $ AB 
$-paths with $  V(\mathcal{Q}_T)\subseteq V(\mathcal{P}_T)\cup 
V(T) $ and $ \mathsf{in}(T), \mathsf{ter}(T)\in V(\mathcal{Q}_T) $. Since the sets $ V(\mathcal{P}_T)\cup 
V(T)  $ for $ T\in \mathcal{T} $ are pairwise disjoint, the lemma follows.
\end{proof}

The following lemma handles the special case of Theorem \ref{thm: main} where $\left|\widehat{A}\right| $ is infinite and each vertex in 
$\widehat{B}\setminus 
\widehat{A}$ is popular.
\begin{lemma}\label{lem: everybody popular}
If  $ \left|\widehat{A}\right|> \left|\widehat{B}\right|$, $ \left|\widehat{A}\right|\geq \aleph_0 $ and $ 
O=\widehat{B}\setminus \widehat{A}$, then a proper subset of $ A $ can be linked onto $ B $. In particular, $ \mathcal{W} 
$ is hindered. 
\end{lemma}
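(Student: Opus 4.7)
The plan is to produce a set $\mathcal{Q}$ of pairwise disjoint $AB$-paths with $\mathsf{ter}(\mathcal{Q})=B$ and $\mathsf{in}(\mathcal{Q})\subsetneq A$; since $B$ is always an $AB$-separator, such a $\mathcal{Q}$ is automatically a hindrance, so this gives both conclusions of the lemma at once.

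Set $\kappa:=|\widehat{A}|$, enumerate $\widehat{B}\setminus\widehat{A}=\{v_\alpha:\alpha<\lambda\}$ where $\lambda=|\widehat{B}\setminus\widehat{A}|\leq|\widehat{B}|<\kappa$, and use the hypothesis $v_\alpha\in O$ to fix, for each $\alpha$, a $\kappa$-set $\mathcal{T}^\alpha$ of $v_\alpha$-joint augmenting trails. The crux of the proof is a transfinite recursion, modelled on the one in Observation~\ref{obs: dis to strongly dis}, that picks a trail $T_\alpha\in\mathcal{T}^\alpha$ for each $\alpha<\lambda$ so that $\{T_\beta:\beta\leq\alpha\}$ is always strongly disjoint. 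At stage $\alpha$ I must reject those $T\in\mathcal{T}^\alpha$ that meet $V(T_\beta)$ or share a $\mathcal{P}$-path with $T_\beta$ for some earlier $\beta$. Because $v_\alpha\in\widehat{B}$ has no outgoing edge in $D$ and is missed by every member of $\mathcal{P}$, the vertex $v_\alpha$ lies outside $\bigcup_{\beta<\alpha}\bigl(V(T_\beta)\cup V(\mathcal{P}_{T_\beta})\bigr)$; so by the $v_\alpha$-jointness of $\mathcal{T}^\alpha$ every vertex in this ``bad'' set is used by at most one trail of $\mathcal{T}^\alpha$, yielding at most $|\alpha|\cdot\aleph_0<\kappa$ spoiled candidates. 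An admissible $T_\alpha$ therefore remains, and the recursion runs all the way to $\lambda$.

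With $\mathcal{T}:=\{T_\alpha:\alpha<\lambda\}$ in hand, Lemma~\ref{lem: augmenting mass} delivers a partial linkage $\mathcal{Q}_0$ with $\mathsf{in}(\mathcal{Q}_0)=\mathsf{in}(\mathcal{P})\cup\{\mathsf{in}(T_\alpha):\alpha<\lambda\}$ and $\mathsf{ter}(\mathcal{Q}_0)=\mathsf{ter}(\mathcal{P})\cup(\widehat{B}\setminus\widehat{A})=B\setminus W$, where $W:=\widehat{A}\cap\widehat{B}$. Each $w\in W$ has no ingoing edge (it is in $A$) and no outgoing edge (it is in $B$), so it is isolated in both $D$ and $D^*$; in particular it lies on no $\mathcal{P}$-path and on no non-trivial alternating trail, hence $w\notin V(\mathcal{Q}_0)$. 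Adding the trivial paths $\{(w):w\in W\}$ to $\mathcal{Q}_0$ therefore yields a set $\mathcal{Q}$ of pairwise disjoint $AB$-paths with $\mathsf{ter}(\mathcal{Q})=B$ and
\[|\mathsf{in}(\mathcal{Q})\cap\widehat{A}|\leq\lambda+|W|\leq 2|\widehat{B}|<\kappa=|\widehat{A}|,\]
so $\mathsf{in}(\mathcal{Q})\subsetneq A$, as desired.

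The main obstacle I anticipate is the cardinal bookkeeping in the recursion: one must verify that $v_\alpha$-jointness (a strictly weaker notion than disjointness) still forces the spoiled set to have size $<\kappa$. The key point making this work is that $v_\alpha$ itself never belongs to any $V(T_\beta)$ or to $V(P)$ for $P\in\mathcal{P}$, so the ``shared vertex with some earlier structure'' count stays injective and does not allow many trails of $\mathcal{T}^\alpha$ to pile up on a single bad vertex.
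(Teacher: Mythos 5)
Your proposal is correct and follows essentially the same route as the paper: reduce to finding a strongly disjoint family of augmenting trails covering $\widehat{B}$, build it by transfinite recursion using that $v_\alpha$-jointness together with $v_\alpha$ lying outside $V(\mathcal{P})$ and all previously built structure keeps the set of spoiled candidates of size $<\kappa$, then apply Lemma~\ref{lem: augmenting mass}. The only (cosmetic) difference is that you append the trivial paths on $\widehat{A}\cap\widehat{B}$ at the end, justified by noting these vertices are isolated, whereas the paper includes them in $\mathcal{T}$ from the start and has the recursion avoid $\widehat{A}\cap\widehat{B}$.
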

\begin{proof}
 By Lemma \ref{lem: augmenting mass}, it is enough to find a set $ 
\mathcal{T} $ of strongly disjoint 
augmenting trails with $ \mathsf{ter}(T)=\widehat{B} $. For each $v\in  \widehat{B}\cap \widehat{A} $, let the
trivial augmenting trail $ v $ be in $ \mathcal{T} $. By assumption, for each 
$ v \in \widehat{B}\setminus \widehat{A} $ there is an $ \left|\widehat{A}\right| $-set $ \mathcal{T}_v $ of $ v $-joint 
augmenting trails.  Thus the rest of  $ 
\mathcal{T} $ can be constructed by a straightforward transfinite recursion. 

Indeed, let $ \lambda:=\left|\widehat{B}\setminus 
\widehat{A}\right| $ and let $ \widehat{B}\setminus 
\widehat{A}=\{ v_\alpha:\ \alpha<\lambda \} $ be an enumeration. In step $ \alpha<\lambda $ we have already chosen 
strongly 
disjoint alternating trails $ \mathcal{T}_\alpha=\{ T_\beta:\ \beta<\alpha \} $ where   $ 
V(T_\beta)\cap \widehat{A}\cap \widehat{B}=\emptyset  $ and $ \mathsf{ter}(T_\beta)=v_\beta $ for $ \beta<\alpha $. The 
trails in $ 
\mathcal{T}_\alpha $ meet at most 
$ \left|\alpha\right| \cdot \aleph_0<\left|\widehat{A}\right| $ paths in $ \mathcal{P} $ and we know that $ \left|\widehat{A}\cap 
\widehat{B}\right|<\left|\widehat{A}\right| $.  Therefore we can pick a $ T_\alpha \in \mathcal{T}_{v_\alpha} $  that is 
disjoint from: the trails in $ 
\mathcal{T}_\alpha $, those paths in $ \mathcal{P} $ that meet a trail in $ \mathcal{T}_\alpha $, and  $ \widehat{A}\cap 
\widehat{B} $.
\end{proof}

From now on, we assume that $ \mathcal{P} $ is wasteful w.r.t. $ \mathcal{W} $, i.e. $ \left|\widehat{A}\right| 
>\left|\widehat{B}\right|$.  Suppose 
first that $ \left|\widehat{A}\right|\leq\aleph_0 $. Then $ n:=\left|\widehat{B}\right|<\aleph_0 $. We apply induction on $ n $. 
If $ n=0 $, then $ \mathcal{P} $ itself is a hindrance. Assume that $ n>0 $. If there is no augmenting trail, then we are done 
by Lemma \ref{lem: no aug hind}. If there is an augmenting trail $ T $, then we apply Lemma \ref{lem: augmenting mass} 
with $ \mathcal{T}=\{ T \} $. For the resulting $ \mathcal{Q} $, let $ \mathcal{L}':=(D,A,B, \mathcal{Q}) $. Then  $ 
\left|\widehat{A}_{\mathcal{L}'}\right|=\left|\widehat{A}_{\mathcal{L}}\right|-1 $ and $ 
\left|\widehat{B}_{\mathcal{L}'}\right|=\left|\widehat{B}_{\mathcal{L}}\right|-1=n-1 $ according to Lemma \ref{lem: 
augmenting mass}. Since $ \mathcal{W}_{\mathcal{L}'}=\mathcal{W}_{\mathcal{L}}  $, we are done by applying the 
induction 
hypothesis for $ n-1 $.

\subsection{Elimination of unpopular vertices in $ \omega $ steps}
Now we can assume that $  \aleph_1\leq \left|\widehat{A}\right|=:\kappa  $.  We can also assume, without loss of generality, 
that $ \kappa$ is regular. Indeed, otherwise we pick an uncountable regular cardinal $ \lambda $ with $ \kappa > \lambda 
>\left|\widehat{B}\right| $ and delete all but $ \lambda $ many vertices in $ \widehat{A} $. Any hindrance $ \mathcal{H} $ 
of the resulting partially linked web can be extended to a hindrance of the original by adding the deleted vertices to $ 
\mathcal{H} $ as trivial paths.

We 
define recursively  partially linked webs $ \mathcal{L}_n=(D_n,A,B_n,\mathcal{P}_n) $ for  $ n<\omega $ (see Figure 
\ref{fig: figure}). Let 
 $ \mathcal{L}_0:=\mathcal{L} $.  Suppose  that $ 
\mathcal{L}_n $ is already 
defined. Let us denote $ U_n:=U_{\mathcal{L}_n} $ and $ O_n:=O_{\mathcal{L}_n} $, as well as $ 
\widehat{A}_n:=\widehat{A}_{\mathcal{L}_n} $ and $ 
\widehat{B}_n:=\widehat{B}_{\mathcal{L}_n} $. We define  $ 
B_{n+1}:=(B_n\setminus (U_n\setminus A))\cup 
N_{D_n}^{-}(U_n) $. Observe that $ B_{n+1} $ separates $ B_n $ from $ A $ in $ D_n $. To define $ 
D_{n+1}=(V,E_{n+1}) $, 
we define  $ E_{n+1} $  from $ E_n $ by the deletion 
of the outgoing edges of the vertices in $ N_{D_n}^{-}(U_n) $. Finally, we let $ \mathcal{P}_{n+1} $ consist of the 
initial segments of the paths in  $ \mathcal{P}_n $ up to their first vertex in $ B_{n+1} $. 

\begin{figure}[h]
\centering

\begin{tikzpicture}[thick,scale=0.75, every node/.style={transform shape}]

\draw  (-4.5,5.5) rectangle (5.5,3.5);
\draw  (-7,0.5) rectangle (5.5,-1.5);
\draw  plot[smooth, tension=.7] coordinates {(-3,5.5) (-3,3.5)};
\draw  plot[smooth, tension=.7] coordinates {(-1,5.5) (-1,3.5)};

\draw[->]  plot[smooth, tension=.7] coordinates {(0,-0.5) (-0.5,1) (0,2) (-0.5,2.5)};
\draw[->]   plot[smooth, tension=.7] coordinates {(1,-0.5) (0.5,1) (1,1.5) (0.5,2.5)};
\draw[->]   plot[smooth, tension=.7] coordinates {(2,-0.5) (2.5,1) (2,1.5) (2.5,2) (2,3) (2,4.5)};
\draw[->]  plot[smooth, tension=.7] coordinates {(3,-0.5) (3.5,1.5) (3,2.5) (3,4.5)};
\draw[->]  plot[smooth, tension=.7] coordinates {(4,-0.5) (4.5,1.5) (4,2.5) (4,4.5)};


\draw[dashed, ->]  plot[smooth, tension=.7] coordinates {(-0.5,2.5) (-1,3) (0,4.5)};
\draw[dashed, ->]  plot[smooth, tension=.7] coordinates {(0.5,2.5) (0,3) (1,4.5)};

\node at (-3.8,5.8) {$O_n$};
\node at (-2,5.8) {$U_n$};
\node at (-2.8,6.2) {$\widehat{B}_n$};
\node at (-3.8,0.8) {$\widehat{A}$};
\node at (-1,1) {$A$};
\node at (0.2,6.2) {$B_n$};
\node at (-0.4,3) {$W_n$};

\node (v2) at (-2.4,3.8) {};
\node (v6) at (-2,3.8) {};
\node (v4) at (-1.8,3.8) {};
\node (v11) at (-1.4,3.8) {};
\node (v1) at (-3.4,-0.4) {};
\draw[->]  (v1) edge (v2);
\node (v3) at (-1,3) {};
\draw[->]  (v3) edge (v4);
\node (v5) at (-2,2.6) {};
\draw[->]  (v5) edge (v6);
\node (v9) at (-1.8,3.6) {};
\node (v8) at (-2.2,3.8) {};
\node (v7) at (-2.4,2.2) {};
\draw[->]  (v7) edge (v8);
\node at (-1.6,3.8) {};
\node (v10) at (-0.4,2.4) {};
\draw[->]  (v10) edge (v11);
\node (v12) at (0.6,2.4) {};
\node (v13) at (-1.2,3.8) {};
\draw[->]  (v12) edge (v13);
\node (v14) at (-1.6,2.6) {};
\draw[->]  (v14) edge (v4);
\node at (1.4,1.8) {$\mathcal{P}_n$};
\end{tikzpicture}
\caption{A step of the recursion. (Drawing $A$ and 
$B$ as disjoint is done purely for clarity.)} \label{fig: figure}
\end{figure}
\begin{observation}\label{obs: A hat unchanged}
$ \widehat{A}_{n}=\widehat{A} $ for each $ n<\omega $.
\end{observation}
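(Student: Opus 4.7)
The plan is to proceed by induction on $n$. Since $\widehat{A}_n$ is defined as $A\setminus \mathsf{in}(\mathcal{P}_n)$, and $\widehat{A}=A\setminus\mathsf{in}(\mathcal{P})=A\setminus\mathsf{in}(\mathcal{P}_0)$, the base case $n=0$ is immediate. For the inductive step it suffices to show that $\mathsf{in}(\mathcal{P}_{n+1})=\mathsf{in}(\mathcal{P}_n)$, because then $\mathsf{in}(\mathcal{P}_n)=\mathsf{in}(\mathcal{P})$ for every $n$ by induction. Unpacking the recursive construction, each element of $\mathcal{P}_{n+1}$ is the initial segment of some $P\in\mathcal{P}_n$ up to its first vertex in $B_{n+1}$; such an initial segment, whether trivial or non-trivial, has the same initial vertex as $P$. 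Hence the inclusion $\mathsf{in}(\mathcal{P}_{n+1})\subseteq\mathsf{in}(\mathcal{P}_n)$ is clear, and the reverse inclusion reduces to showing that every $P\in\mathcal{P}_n$ actually meets $B_{n+1}$, so that the initial segment is well-defined.

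The key step, and the only nontrivial point, is this meeting condition. I would verify it by looking at the terminal vertex: for each $P\in\mathcal{P}_n$ one has $\mathsf{ter}(P)\in\mathsf{ter}(\mathcal{P}_n)\subseteq B_n\setminus\widehat{B}_n$, so in particular $\mathsf{ter}(P)\notin U_n$ (since $U_n\subseteq\widehat{B}_n$). Combined with the definition $B_{n+1}=(B_n\setminus(U_n\setminus A))\cup N_{D_n}^{-}(U_n)$, this yields $\mathsf{ter}(P)\in B_n\setminus(U_n\setminus A)\subseteq B_{n+1}$, so $P$ does reach $B_{n+1}$ (at worst at its own endpoint). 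The ``main obstacle'' is really nothing more than making sure this terminal vertex is not accidentally stripped from $B_n$ when forming $B_{n+1}$, which is exactly what the disjointness $\mathsf{ter}(\mathcal{P}_n)\cap U_n=\emptyset$ guarantees. Once this is in place, the initial segments are well-defined paths with the same initial vertices as the paths in $\mathcal{P}_n$, the inductive step closes, and the observation follows.
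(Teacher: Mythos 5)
Your argument is correct and is exactly the (omitted) reasoning behind the paper's unproved Observation: since each path in $\mathcal{P}_{n+1}$ is an initial segment of a path in $\mathcal{P}_n$, the initial vertices are unchanged, and the only point needing care is that every $P\in\mathcal{P}_n$ actually meets $B_{n+1}$. Your verification of that point via $\mathsf{ter}(P)\notin U_n$ is sound (the paper's remark that $B_{n+1}$ separates $B_n$ from $A$ in $D_n$ gives the same conclusion), so nothing is missing.
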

\begin{observation}\label{obs: separates n}
For every $ n<\omega $, $ B_n $ separates $ \bigcup_{k\leq n}B_k $ (in particular $ B $) from $ A $ in $ D $.
\end{observation}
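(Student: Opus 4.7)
I would prove the observation by induction on $n$. The base case $n=0$ is immediate since $B_0=B$ and every $AB$-path in $D$ terminates in $B$.

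For the inductive step I would first isolate an auxiliary statement: \emph{every edge of $D$ absent from $D_n$ is an outgoing edge of some vertex of $\bigcup_{k\leq n}B_k$.} This is a one-line consequence of the construction: $D_{k+1}$ is obtained from $D_k$ by deleting exactly the outgoing edges of $N^{-}_{D_k}(U_k)$, and $N^{-}_{D_k}(U_k)\subseteq B_{k+1}$ by the defining formula of $B_{k+1}$; a trivial parallel induction on $n$ glues these individual deletions together.

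Assuming the observation for $n$, let $P$ be a path in $D$ from $A$ to some vertex in $\bigcup_{k\leq n+1}B_k$. If $P$ meets no vertex of $\bigcup_{k\leq n}B_k$, its terminal vertex lies in $B_{n+1}$ and we are done. Otherwise let $v$ be the first vertex of $P$ inside $\bigcup_{k\leq n}B_k$; applying the induction hypothesis to the initial segment of $P$ up to $v$ forces $v\in B_n$. If $v\in A$, then $v\notin U_n\setminus A$, so by the definition $B_{n+1}=(B_n\setminus(U_n\setminus A))\cup N^{-}_{D_n}(U_n)$ we already have $v\in B_{n+1}$. Otherwise the initial segment of $P$ up to $v$ has length at least one, and by the minimality of $v$ no vertex of this segment other than $v$ itself lies in $\bigcup_{k\leq n}B_k$; the auxiliary statement then guarantees that every edge of this segment is present in $D_n$, so the segment is an $A$-$B_n$ path in $D_n$. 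The fact already recorded in the text that $B_{n+1}$ separates $B_n$ from $A$ in $D_n$ now finishes the induction.

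I do not expect any serious obstacle: the only subtlety is cleanly routing between paths in $D$ and paths in $D_n$, which the auxiliary edge-tracking statement handles, together with the reminder that the deletions carrying $B_n$ to $B_{n+1}$ only discard vertices from $U_n\setminus A$, so any candidate first-visit vertex lying in $A$ survives into $B_{n+1}$ automatically.
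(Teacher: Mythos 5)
Your proof is correct and follows essentially the same route as the paper's: induction on $n$, using the facts that $B_{n+1}$ separates $B_n$ from $A$ in $D_n$ by construction and that every edge in $E\setminus E_n$ is an outgoing edge of a vertex in $\bigcup_{k\leq n}B_k$ to transfer the separation back to $D$. Your version merely spells out the path-truncation argument (and the harmless $v\in A\cap B_n$ case) that the paper leaves implicit.
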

\begin{proof}
For $ n=0 $ this is trivial. Suppose we already know for some $ n $ that $ B_n $ separates 
$ \bigcup_{k\leq n}B_k $ from $ A $ in $ D $. By construction $ B_{n+1} $ separates $ B_n $ from $ A $ in $ D_n 
$. Thus it follows by induction and by $ E_n\subseteq E $ that  $ B_{n+1} $ separates $ \bigcup_{k\leq n+1}B_k $ from $ A 
$ in $ D_n 
$. Since
every edge in $ E\setminus E_n $ is an outgoing edge of a vertex in $ \bigcup_{k\leq n}B_k $,  $ B_{n+1} $ separates $ \bigcup_{k\leq n+1}B_k 
$ from $ A $ in $ D 
$ as well.
\end{proof}

Let $\boldsymbol{W_n}:= V(\mathcal{P}_{n})\setminus 
V(\mathcal{P}_{n+1})  $.
\begin{lemma}\label{lem: remains alternating trail}
For every $ n<\omega $, the following holds:
\begin{enumerate}
\item Every $ \mathcal{L}_{n} $-alternating trail $ T $ with 
$ V(T)\cap (W_n \cup N_{D_{n}}^{-}(U_n))\subseteq \{ \mathsf{ter}(T) \} $ is an $ \mathcal{L}_{n+1} $-alternating 
trail.
\item Every $ \mathcal{L}_{n+1} $-alternating trail $ T $ with $ V(T)\cap W_n \subseteq \{ \mathsf{ter}(T) \} $ is an $ 
\mathcal{L}_n $-alternating trail.
\end{enumerate}
\end{lemma}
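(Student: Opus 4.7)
The plan is to verify the four defining conditions of an alternating trail directly in the target partially linked web, using the precise recursive description of $\mathcal{L}_{n+1}$. The structural facts I rely on are: $\widehat{A}_{n+1}=\widehat{A}_n=\widehat{A}$ by Observation \ref{obs: A hat unchanged}; for each path $P=u_0u_1\dots u_\ell\in\mathcal{P}_n$ with truncation index $m_P$ (the least $j$ with $u_j\in B_{n+1}$), one has $V(P)\cap W_n=\{u_{m_P+1},\dots,u_\ell\}$; consequently $E(\mathcal{P}_{n+1})\subseteq E(\mathcal{P}_n)$, and every dropped edge $u_j u_{j+1}$ (with $j\geq m_P$) has tail either in $W_n$ (when $j>m_P$) or in $N_{D_n}^-(U_n)$ (when $j=m_P$, since then $u_{m_P}\in B_{n+1}\setminus B_n$); finally, $E_{n+1}$ is obtained from $E_n$ by removing exactly the outgoing edges of vertices in $N_{D_n}^-(U_n)$.

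For part~(1), let $T=(v_0v_1,\dots,v_{k-1}v_k)$ be an $\mathcal{L}_n$-alternating trail with $V(T)\cap(W_n\cup N_{D_n}^-(U_n))\subseteq\{\mathsf{ter}(T)\}$. Condition~(2) is immediate. For condition~(1), each tail $v_i$ (with $i<k$) avoids $N_{D_n}^-(U_n)$, so if $v_iv_{i+1}\in E_n\setminus E(\mathcal{P}_n)$ it is not deleted and still lies outside $E(\mathcal{P}_{n+1})$, hence in $E_{n+1}\setminus E(\mathcal{P}_{n+1})\subseteq E_{n+1}^*$. If instead $v_iv_{i+1}$ is the reversal of an edge $v_{i+1}v_i\in E(\mathcal{P}_n)$ on some path $P$ with $v_i=u_j$, then $v_i\notin W_n$ forces $j\leq m_P$, so $v_{i+1}v_i\in E(\mathcal{P}_{n+1})$ and $v_iv_{i+1}\in E_{n+1}^*\setminus E_{n+1}$. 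Condition~(3) follows because any repeated vertex of $T$ lies in $V(\mathcal{P}_n)\cap V(T)\setminus W_n=V(\mathcal{P}_{n+1})\cap V(T)$, noting that $v_k$ itself is non-repeating. Condition~(4) transfers verbatim, since the forward/reversed classification of every edge of $T$ is identical in $\mathcal{L}_n$ and $\mathcal{L}_{n+1}$.

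Part~(2) runs in the reverse direction, starting from an $\mathcal{L}_{n+1}$-alternating trail $T$ with $V(T)\cap W_n\subseteq\{\mathsf{ter}(T)\}$. The essential observation is that no edge of $T$ can lie in $E(\mathcal{P}_n)\setminus E(\mathcal{P}_{n+1})$: by the structural facts, such an edge either has tail in $W_n$ (ruled out at non-terminal positions of $T$) or has tail in $N_{D_n}^-(U_n)$ and is deleted when forming $E_{n+1}$, so unavailable to $T$. Consequently every edge of $T$ lies either in $E_n\setminus E(\mathcal{P}_n)\subseteq E_n^*$ or as the reversal of an edge in $E(\mathcal{P}_{n+1})\subseteq E(\mathcal{P}_n)$, yielding condition~(1). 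Conditions (2)--(3) follow from $\widehat{A}_n=\widehat{A}_{n+1}$ and $V(\mathcal{P}_{n+1})\subseteq V(\mathcal{P}_n)$, and (4) transfers because the forward/reversed classifications agree in $\mathcal{L}_n$ and $\mathcal{L}_{n+1}$.

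The main difficulty is not any single nontrivial step but the careful bookkeeping required: for every edge of $T$ one must track its status as an original or reversed edge both in $D_n^*$ and $D_{n+1}^*$, and correlate it with vertex membership in $V(\mathcal{P}_n)$ versus $V(\mathcal{P}_{n+1})$. The two hypotheses are tailored exactly to this: $V(T)\cap W_n\subseteq\{\mathsf{ter}(T)\}$ forbids $T$ from entering path portions that disappear in $\mathcal{P}_{n+1}$, while the additional restriction to $N_{D_n}^-(U_n)$ in part~(1) guarantees that no outgoing edge deleted when forming $E_{n+1}$ is used by $T$.
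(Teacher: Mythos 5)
Your proposal is correct and follows essentially the same route as the paper: a direct verification of the four defining properties of alternating trails, with the key point being that every edge gained or lost in passing between $D_n^*$ and $D_{n+1}^*$ has its tail in $W_n\cup N_{D_n}^{-}(U_n)$ (resp.\ in $W_n$), which the hypothesis on $V(T)$ rules out for all non-terminal vertices. Your explicit tracking of the truncation index $m_P$ and of the forward/reversed classification is just a more detailed rendering of the same bookkeeping the paper carries out.
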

\begin{proof}
The lemma follows directly from the definitions by checking formally the properties (\ref{item: reversed edges})-(\ref{item 
go back rule}) in the definition of alternating trails.

 Both statements are true for trivial trails 
because  $ \widehat{A}_n=\widehat{A}_{n+1} $ by Observation \ref{obs: A hat 
unchanged}. Suppose that $ T=(v_0v_1,\dots, v_{n-1}v_n) $  is a non-trivial $ \mathcal{L}_{n} $-alternating trail with 
$ V(T)\cap (W_n \cup N_{D_{n}}^{-}(U_n)) \subseteq \{ \mathsf{ter}(T) \} $.  Since each edge in  $ E_n^{*}\setminus 
E_{n+1}^{*} $ is  an outgoing edge of  a vertex in $ (W_n \cup N_{D_{n}}^{-}(U_n)) $, no such edge is used by $ T 
$. Thus $ 
T $ is a trail in $ D^{*}_{n+1} $, hence $ T $ satisfies (\ref{item: reversed edges})  w.r.t. $ \mathcal{L}_{n+1} $. Clearly,  
$ 
\mathsf{in}(T)\in 
\widehat{A}_{n}=\widehat{A}_{n+1} $, 
thus (\ref{item: start in A hat}) remains true. Since $ V(\mathcal{P}_{n+1})\subseteq V(\mathcal{P}_n) $ and $ T $ does not 
repeat vertices that are not 
in $ V(\mathcal{P}_{n}) $,  it does not repeat vertices not in $ V(\mathcal{P}_{n+1}) $ either. Therefore (\ref{item: repeat 
vertex})  holds. Since $ T $ 
respects (\ref{item go back rule}) w.r.t. $ \mathcal{P}_n $ and $ T $ has 
at most its terminal vertex in 
$ W_n  $, no violation of  (\ref{item go back rule}) can occur.

Suppose that $ T=(v_0v_1,\dots, v_{n-1}v_n) $  is a non-trivial $ \mathcal{L}_{n+1} $-alternating trail with $ V(T)\cap 
W_n 
\subseteq \{ \mathsf{ter}(T) \} 
$. Note that if an  $ e\in E_{n+1}^{*} $ is not in $ E_{n}^{*}$,  then there is a $ P\in \mathcal{P}_n $ with  $e\in E(P) $ such that the 
initial 
segment $ P' $ of $ P $ that is in $  \mathcal{P}_{n+1} $ does not 
use $ e $. In other words, $ e $ is reversed in $ D^{*}_n $ but not any more in $ D^{*}_{n+1} $. Hence each edge in $ E_{n+1}^{*}\setminus 
E_{n}^{*} $ is an outgoing edge of a vertex in $ W_n $.   Thus no such edge is used by $ T 
$ and therefore $ 
T $ is a trail in $ D^{*}_{n} $, hence (\ref{item: reversed edges}) holds.  Clearly,  $ \mathsf{in}(T)\in 
\widehat{A}_{n+1}=\widehat{A}_{n} $, 
thus (\ref{item: start in A hat}) holds. We claim that no vertex in $ V(\mathcal{P}_n) $ is repeated by $ T $.  By definition, $ V(\mathcal{P}_n)= 
V(\mathcal{P}_{n+1})\cup W_n 
$ and no vertex in $ V(\mathcal{P}_n) $ is repeated.  Furthermore, only the terminal vertex of $ T $ may be in $ W_n $ but 
terminal vertices are never repeated by the definition of alternating trails. Thus (\ref{item: repeat vertex}) holds. Finally, $ T $ respects (\ref{item 
go back rule}) corresponding to $ \mathcal{P}_{n+1} $ and at most its terminal vertex is in $ W_n $, thus (\ref{item 
go back rule}) holds corresponding to $ \mathcal{P}_n $ as well.
\end{proof}
\begin{lemma}\label{lem: key lemma}
For every $ n<\omega $:
\begin{enumerate}[label=(\roman*)]
\item[$ (i_n) $]\label{item: no kappa aug} there is no $ \kappa $-set $ \mathcal{T} $ of disjoint $ \mathcal{L}_n $-augmenting trails;
\item[$ (ii_n) $]\label{item: few meets Wn} there is no $ \kappa $-set $ \mathcal{T} $ of disjoint $ \mathcal{L}_n 
$-alternating $ \widehat{A}W_n$-trails.
\end{enumerate}
\end{lemma}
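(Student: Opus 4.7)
The plan is to prove $(i_n)$ and $(ii_n)$ \emph{simultaneously} by induction on $n$, with the inductive step leveraging Lemma \ref{lem: remains alternating trail} to convert between $\mathcal{L}_n$-alternating and $\mathcal{L}_{n+1}$-alternating trails, and Observation \ref{obs: dis to strongly dis} to pass freely from ``disjoint'' to ``strongly disjoint'' at no cost in cardinality.

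\textbf{Base case $n=0$.} Statement $(i_0)$ is immediate: disjoint augmenting trails in $\mathcal{L}_0$ have pairwise distinct terminals in $\widehat{B}_0=\widehat{B}$, and $|\widehat{B}|<\kappa$. For $(ii_0)$, I would suppose we have a $\kappa$-set $\mathcal{T}$ of disjoint $\mathcal{L}_0$-alternating $\widehat{A}W_0$-trails and refine (Observation \ref{obs: dis to strongly dis}) to a strongly disjoint $\kappa$-subfamily. Each terminal $w_T$ lies on a unique path $P_T\in \mathcal{P}$, and by strong disjointness these $P_T$ are distinct. The idea is to extend each $T$ past $w_T$ by following $P_T$ in the direction dictated by properties (\ref{item: reversed edges})–(\ref{item go back rule}) (going forward on $P_T$ via reversed edges in $D^*$ when $T$ arrived at $w_T$ via a reversed edge, and backward otherwise), continuing until one first reaches a vertex of $\widehat{B}$; strong disjointness guarantees the extended trails remain pairwise disjoint, contradicting $(i_0)$.

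\textbf{Inductive step.} Assume $(i_n)$ and $(ii_n)$. For $(i_{n+1})$, take a $\kappa$-set of disjoint $\mathcal{L}_{n+1}$-augmenting trails and pass to strongly disjoint ones. Classify each trail $T$ according to whether $V(T)\cap W_n\subseteq\{\mathsf{ter}(T)\}$. If this fails for $\kappa$ many $T$, truncate each at its first interior $W_n$-vertex; by Lemma \ref{lem: remains alternating trail}(2) the truncated trails are $\mathcal{L}_n$-alternating $\widehat{A}W_n$-trails, contradicting $(ii_n)$. If instead the condition holds for $\kappa$ many $T$, then by Lemma \ref{lem: remains alternating trail}(2) each is already an $\mathcal{L}_n$-alternating trail. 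For those with $\mathsf{ter}(T)\in \widehat{B}_n$ we are done; for the remaining ones $\mathsf{ter}(T)\in \widehat{B}_{n+1}\setminus \widehat{B}_n\subseteq N_{D_n}^{-}(U_n)\setminus B_n$, and we extend $T$ by a single edge into $U_n$ to obtain an $\mathcal{L}_n$-augmenting trail, verifying properties (\ref{item: reversed edges})–(\ref{item go back rule}) (this new edge lies in $E_n^*\cap E_n$, and the target vertex is in $\widehat{B}_n$ rather than in $V(\mathcal{P}_n)$, so (\ref{item go back rule}) is unaffected). In either sub-case we contradict $(i_n)$. The argument for $(ii_{n+1})$ is analogous: strongly disjoint $\mathcal{L}_{n+1}$-alternating $\widehat{A}W_{n+1}$-trails either hit $W_n$ internally (truncate for $(ii_n)$) or survive as $\mathcal{L}_n$-alternating trails by Lemma \ref{lem: remains alternating trail}(2), and then extending through $P_T$ forces a $\kappa$-set of disjoint $\mathcal{L}_n$-augmenting trails contradicting $(i_n)$.

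\textbf{Main obstacle.} The routine ``strongly-disjoint refinement plus dichotomy via Lemma \ref{lem: remains alternating trail}'' machinery is clean, but the genuinely delicate step is handling terminals in $\widehat{B}_{n+1}\setminus \widehat{B}_n$, which necessarily sit in $N_{D_n}^{-}(U_n)$. Here one must extend the trail by one more edge to reach a vertex of $U_n\subseteq \widehat{B}_n$ while preserving all four alternating-trail axioms; in particular, ensuring the extended edge does not violate rule (\ref{item go back rule}) requires a careful look at whether the pre-extension last edge was forward or reversed, and the case where the terminal already lies on a $\mathcal{P}_n$-path must be excluded or handled separately. The same extension argument is needed to close out $(ii_0)$ and to pass from $\mathcal{L}_{n+1}$-alternating to $\mathcal{L}_n$-augmenting in the first sub-case of $(i_{n+1})$.
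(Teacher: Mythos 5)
Your overall architecture (simultaneous induction, the dichotomy via Lemma \ref{lem: remains alternating trail}, truncation at the first $W_n$-vertex, passing to strongly disjoint subfamilies by Observation \ref{obs: dis to strongly dis}) matches the paper's, and the sub-case of $(i_{n+1})$ in which the trails fail to be $\mathcal{L}_n$-alternating is handled correctly. However, there is a genuine gap in every place where you \emph{extend} a trail to finish the contradiction, and it is the same gap each time: the extensions must land in $U_n$, and you never use the fact that the vertices of $U_n$ are unpopular. Concretely, an $\widehat{A}W_n$-trail ends at a vertex of $W_n=V(\mathcal{P}_n)\setminus V(\mathcal{P}_{n+1})$; following $P_T$ forward leads to $\mathsf{ter}(P_T)\in\mathsf{ter}(\mathcal{P}_n)$ and backward to $\mathsf{in}(P_T)\in\mathsf{in}(\mathcal{P}_n)$, so ``continuing until one first reaches a vertex of $\widehat{B}$'' never succeeds --- no vertex of $P_T$ lies in $\widehat{B}_n$. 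The correct extension goes backwards along $P_T$ to its first vertex in $N_{D_n}^{-}(U_n)$ (the terminal of the corresponding path of $\mathcal{P}_{n+1}$) and then \emph{leaves} $P_T$ along an outgoing edge into $U_n$. The resulting augmenting trails terminate in $U_n$, and distinct trails may share their terminal vertex, since several vertices of $N_{D_n}^{-}(U_n)$ can send edges to the same $u\in U_n$. They are therefore not disjoint, and your claim that ``strong disjointness guarantees the extended trails remain pairwise disjoint, contradicting $(i_0)$'' fails; the same objection applies to your Case 2 of $(i_{n+1})$ and to your sketch of $(ii_{n+1})$.

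The missing ingredient is the paper's intermediate claim: there is no $\kappa$-set of $\mathcal{L}_n$-augmenting trails with terminals in $U_n$ in which any two trails are either disjoint or share exactly their terminal vertex. Its proof is a dichotomy using the regularity of $\kappa$: either fewer than $\kappa$ terminals occur, in which case some single $v\in U_n$ receives $\kappa$ many of the trails, which would be a $\kappa$-set of $v$-joint augmenting trails and make $v$ popular, contradicting $v\in U_n$; or there are $\kappa$ many distinct terminals, and choosing one trail per terminal yields a $\kappa$-set of \emph{disjoint} augmenting trails contradicting $(i_n)$. This use of unpopularity is not a technicality one can argue around: if the terminals reached were popular, no contradiction would arise at all, which is precisely why the recursion removes $U_n$ (and not $O_n$) from $B_n$. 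You need to state and prove this claim and route all three of your extension arguments through it rather than through $(i_n)$ directly.
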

\begin{proof}
We apply induction on $ n $. Clearly, $ (i_0) $ holds because $ \left|\widehat{B}_0\right|<\kappa $ (by $ 
\left|\widehat{B}\right|<\left|\widehat{A}\right|=\kappa $) and every $ 
\mathcal{L}_0 $-augmenting trail
terminates in $ 
 \widehat{B}_0 $ by definition. Suppose that we already know $ (i_n) $ for some $ n $.   Next we prove $ (ii_n) $ and $ 
  (i_{n+1})$.
\begin{claim}\label{clm: no lot trail to Un}
 There is no $ \kappa $-set of $ \mathcal{L}_n 
  $-augmenting trails with $ \mathsf{ter}(\mathcal{T})\subseteq U_n $ where any two trails in $ \mathcal{T} $ are either disjoint or sharing 
   exactly their terminal vertex.
\end{claim}
\begin{proof}
If $ \left|\mathsf{ter}(\mathcal{T}) \right|<\kappa$, then, by the regularity of $ \kappa $, there is 
 a 
 $ v\in \mathsf{ter}(\mathcal{T}) $ such that $ \kappa $ many $ T\in \mathcal{T} $ terminates at $ v $, contradicting  $v\in U_n $. Thus $ 
 \left|\mathsf{ter}(\mathcal{T})\right|=\kappa $. But then by choosing for each $ v\in \mathsf{ter}(\mathcal{T}) $ exactly one $ T\in 
 \mathcal{T} $ with $ \mathsf{ter}(T)=v $, we obtain a $ \kappa $-set of disjoint $ \mathcal{L}_n 
 $-augmenting trails which contradicts $ (i_n) $.
\end{proof}

 Suppose for a contradiction that $ (ii_n) $ fails, i.e. there is a $ \kappa $-set $ \mathcal{T} $ of disjoint $ \mathcal{L}_n 
 $-alternating $ 
 \widehat{A}W_n $-trails. By Observation \ref{obs: dis to strongly dis}, we may assume that the trails in $ \mathcal{T} $ 
 are strongly disjoint. For $ 
 T\in \mathcal{T} $, let 
 $ P_T $ be the unique path in $ \mathcal{P}_n $ with $ \mathsf{ter}(T)\in V(P_T) $ and let $ v_P $ be the terminal vertex of 
 the unique path in $ \mathcal{P}_{n+1} $ that is a proper initial segment of $ P_T $ (exists by the definition of $ W_n $). 
 Let $ T' $ be a forward-extension of $ 
 T $ that 
 we obtain by going backwards along $ P_T $ until  $ v_P $  and then adding an outgoing edge of $ v_P $ in $ D_n $
 with head in $ U_n $ 
 (which exists because $ v_P $ is nothing else than the first vertex of $ P_T $ that is in $ N_{D_{n}}^{-}(U_n) $). Then $ 
 \mathcal{T}':=\{ T':\ T\in \mathcal{T} \} $ is a $ \kappa $-set of $ \mathcal{L}_n 
 $-augmenting paths with $ \mathsf{ter}(\mathcal{T}')\subseteq U_n $ where any two trails in $ \mathcal{T}' $ are either disjoint or sharing 
 exactly their terminal vertex which contradicts Claim \ref{clm: no lot trail to Un}.
 
 Suppose for a contradiction that $ (i_{n+1}) $ fails, i.e. there is a $ \kappa $-set $ \mathcal{T} $ of disjoint $ 
 \mathcal{L}_{n+1} 
 $-augmenting trails. By $( i_n) $ we know that less than $ \kappa $ many $ T\in \mathcal{T} $ are $ \mathcal{L}_n 
 $-augmenting trails, thus by 
 deleting these we can assume that no $ T\in \mathcal{T} $ is an $ \mathcal{L}_n $-augmenting trail. then exactly one of the following holds for 
 each
  $ T\in \mathcal{T} $: either $ T $ is not even an $ \mathcal{L}_n $-alternating trail or it is but $ 
 \mathsf{ter}(T)\in B_{n+1}\setminus B_n $. Since one of these two cases occurs $ \kappa $ often, we can assume that one of them holds 
 actually for every $ T\in \mathcal{T} $. Suppose first, that no trail in $ \mathcal{T} $ is $ \mathcal{L}_n $-alternating. 
 Then  (2) of Lemma 
 \ref{lem: remains alternating trail}   guarantees that each $ 
 T\in \mathcal{T} $ meets $ W_n $. Let $ \mathcal{T}' $ consists of the initial segments of the trails in $ \mathcal{T} $ up to their first vertex in $ 
 W_n $. Then (2) 
 of Lemma \ref{lem: remains alternating trail} ensures that $ 
 \mathcal{T}' 
 $ consists of $ \mathcal{L}_n $-alternating trails. Since $ \left|\mathcal{T}'\right|=\kappa $, this contradicts $ (ii_n) $. Assume now that every 
 $ T\in \mathcal{T} $ is $ \mathcal{L}_n $-alternating but terminates in $ B_{n+1}\setminus B_n= N_{D_n}^{-}(U_n) $. Then 
 every $ T\in 
 \mathcal{T} $ can be extended forward by a new edge to  an $ \mathcal{L}_n $-augmenting trail $ T' $ that terminates in $ 
 U_n $. But then the set 
 $ \mathcal{T}' $  of these extensions contradicts Claim \ref{clm: no lot trail to Un}.
\end{proof}
\begin{corollary}\label{cor: few arrives to A}
$ \left|\widehat{B}_n\cap \widehat{A}\right|<\kappa $ for each $ n<\omega $.
\end{corollary}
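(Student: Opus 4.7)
The plan is to reduce the statement directly to clause $(i_n)$ of Lemma \ref{lem: key lemma} by exhibiting enough disjoint trivial augmenting trails. The key observation is that each vertex in $\widehat{B}_n \cap \widehat{A}$ spawns such a trivial trail.

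More precisely, I would first recall that by Observation \ref{obs: A hat unchanged} we have $\widehat{A}_n = \widehat{A}$, so the definition of $\mathcal{L}_n$-alternating trail includes, as trivial trails, exactly the singletons $\{v\}$ with $v \in \widehat{A}$. If $v$ additionally lies in $\widehat{B}_n$, then $\mathsf{ter}(v) = v \in \widehat{B}_n$, so $v$ is in fact a (trivial) $\mathcal{L}_n$-augmenting trail.

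Now suppose for contradiction that $|\widehat{B}_n \cap \widehat{A}| \geq \kappa$. Then the family $\mathcal{T} := \{\, v : v \in \widehat{B}_n \cap \widehat{A} \,\}$ is a $\kappa$-set of $\mathcal{L}_n$-augmenting trails, and being singletons indexed by distinct vertices, these trails are pairwise disjoint. This directly contradicts $(i_n)$ from Lemma \ref{lem: key lemma}, so the assumption fails and $|\widehat{B}_n \cap \widehat{A}| < \kappa$, as claimed.

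I do not expect any obstacle: the only minor point to verify is that the definition of alternating trail admits trivial trails precisely for elements of $\widehat{A}$, which is explicit in the excerpt, and that augmenting means terminating in $\widehat{B}_n$, which is likewise explicit. Everything else is a one-line application of $(i_n)$.
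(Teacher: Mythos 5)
Your proof is correct and is essentially identical to the paper's own argument: both identify each $v\in\widehat{B}_n\cap\widehat{A}$ as a trivial $\mathcal{L}_n$-augmenting trail (using Observation \ref{obs: A hat unchanged}) and then invoke $(i_n)$ of Lemma \ref{lem: key lemma} to bound their number below $\kappa$.
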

\begin{proof}
Since $ \widehat{A}=\widehat{A}_n $, each $ v\in \widehat{B}_n\cap \widehat{A} $ counts as a trivial $ 
\mathcal{L}_n $-augmenting trail. Thus the statement follows from $ (i_n) $ of Lemma \ref{lem: key lemma}.
\end{proof}
Now we show that the popularity of the vertices are preserved during the recursion.
\begin{corollary}\label{cor: less than k lose}
Suppose that $ v\in O_n $ and let $ \mathcal{T} $ be a witness for this, i.e.  a $ \kappa $-set  of $ v $-joint $ \mathcal{L}_n 
$-augmenting trails. Then there are only less than $ \kappa $ many $ T\in \mathcal{T} $ that are not $ \mathcal{L}_{n+1} 
$-augmenting trails.
\end{corollary}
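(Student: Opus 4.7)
The plan is to argue by contradiction: assume that $\kappa$ many $T\in\mathcal{T}$ fail to be $\mathcal{L}_{n+1}$-augmenting trails, and derive a contradiction with either $(ii_n)$ of Lemma~\ref{lem: key lemma} or Claim~\ref{clm: no lot trail to Un}.

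First, I want to show $v\in\widehat{B}_{n+1}$, so that for trails terminating at $v$ being $\mathcal{L}_{n+1}$-alternating already implies being $\mathcal{L}_{n+1}$-augmenting. Indeed, $v\in\widehat{B}_n\subseteq B_n$ has no outgoing edge in $D_n$ and $v\notin\mathsf{ter}(\mathcal{P}_n)$, so $v\notin V(\mathcal{P}_n)\supseteq V(\mathcal{P}_{n+1})$; since $v\in O_n$ gives $v\notin U_n$, we obtain $v\in B_n\setminus(U_n\setminus A)\subseteq B_{n+1}$. By Lemma~\ref{lem: remains alternating trail}(1), any $T\in\mathcal{T}$ with $V(T)\cap(W_n\cup N_{D_n}^{-}(U_n))\subseteq\{v\}$ is $\mathcal{L}_{n+1}$-augmenting. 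Noting that $v\notin W_n$ (since $v\notin V(\mathcal{P}_n)$) and $v\notin N_{D_n}^{-}(U_n)$ (since $v$ has no outgoing edge in $D_n$), every bad $T$ must contain a non-terminal vertex in $W_n\cup N_{D_n}^{-}(U_n)$. By the regularity of $\kappa$, I may pass to a $\kappa$-subfamily of bad $T$ of one of two types: (Subcase~1) those meeting $W_n$, or (Subcase~2) those avoiding $W_n$ but meeting $N_{D_n}^{-}(U_n)$.

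In Subcase~1, for each such $T$ let $w_T$ be the first vertex of $T$ in $W_n$. Alternating trails cannot revisit $\widehat{A}$ internally ($A$-vertices have no ingoing edges, and $\widehat{A}\cap V(\mathcal{P}_n)=\emptyset$), so $T|_{w_T}$ is an $\mathcal{L}_n$-alternating $\widehat{A}W_n$-trail. The $v$-jointness of $\mathcal{T}$ together with $w_T\neq v$ ensures that the $T|_{w_T}$'s are pairwise disjoint, producing a $\kappa$-set contradicting $(ii_n)$.

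In Subcase~2, properties (2)--(\ref{item go back rule}) of the alternating-trail definition carry over from $\mathcal{L}_n$ to $\mathcal{L}_{n+1}$ under $V(T)\cap W_n=\emptyset$ (since then $V(T)\cap V(\mathcal{P}_n)\subseteq V(\mathcal{P}_{n+1})$), so the failure of $T$ to be $\mathcal{L}_{n+1}$-alternating must come from using an edge in $E_n^{*}\setminus E_{n+1}^{*}$. Every such edge has its tail either in $W_n$ (for reversed edges) or in $N_{D_n}^{-}(U_n)$ (for forward edges), and the first possibility is ruled out; hence each bad $T$ uses a forward edge out of some vertex of $N_{D_n}^{-}(U_n)$. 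Let $w_T$ be the first such vertex along $T$, pick $u_T\in U_n$ with $w_Tu_T\in E_n$, and form $T^{*}:=T|_{w_T}\cdot w_Tu_T$. \textbf{The main obstacle} is to verify that $T^{*}$ is an $\mathcal{L}_n$-alternating trail, particularly property~(\ref{item go back rule}) at $w_T$: since $T$ itself continues forward at $w_T$, property~(\ref{item go back rule}) applied to $T$ forces either $w_T\notin V(\mathcal{P}_n)$ or $T$'s arrival at $w_T$ to be via a backward edge, and in both scenarios appending the forward edge $w_Tu_T$ after $w_T$ causes no violation. Each $T^{*}$ is thus an $\mathcal{L}_n$-augmenting trail terminating in $U_n$. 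Since $u_T\in U_n\subseteq\widehat{B}_n$ has no outgoing edge in $D_n$ and (by the same argument as for $v$) $u_T\notin V(\mathcal{P}_n)$, the vertex $u_T$ has no outgoing edge in $D_n^{*}$ either, so it can appear in any $T'\in\mathcal{T}$ only as terminal; but the common terminal of $\mathcal{T}$ is $v\neq u_T$, whence $u_T\notin V(T')$ for every $T'\in\mathcal{T}$. Therefore the $T^{*}$'s are pairwise either disjoint or sharing only their terminal vertex, contradicting Claim~\ref{clm: no lot trail to Un}.
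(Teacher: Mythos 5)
Your proof is correct and follows essentially the same route as the paper's: assume $\kappa$ many trails of $\mathcal{T}$ fail to survive, use Lemma~\ref{lem: remains alternating trail}(1) to force each of them to meet $W_n\cup N_{D_n}^{-}(U_n)$, and then split into the two cases that are killed by $(ii_n)$ of Lemma~\ref{lem: key lemma} and by Claim~\ref{clm: no lot trail to Un} respectively. You carry out in detail several verifications the paper leaves implicit (that $v\in\widehat{B}_{n+1}$, the disjointness of the truncated trails via $v$-jointness, and the property-(\ref{item go back rule}) check when appending the edge into $U_n$), but the underlying argument is the same.
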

\begin{proof}
 Let $ 
\mathcal{T}' $ consists of those $ T\in \mathcal{T} $  that are not $ \mathcal{L}_{n+1} $-augmenting trails. Suppose for a 
contradiction that $ \left|\mathcal{T}'\right|=\kappa $.  Then  (1) of Lemma \ref{lem: remains alternating trail} ensures that each $T\in 
\mathcal{T}' $ meets  $ W_n \cup N_{D_{n}}^{-}(U_n) $. Let $ \mathcal{T}'' $ consist of the initial segments of the trails in $ \mathcal{T}' $ 
until the first vertex in $ W_n \cup N_{D_{n}}^{-}(U_n) $. If $ \kappa $ many $ T\in \mathcal{T}'' $ terminates in $ W_n $, then we get a 
contradiction to $ (ii_n) $ of Lemma \ref{lem: key lemma}. If $ \kappa $ many $ T\in \mathcal{T}'' $ terminates in $ N_{D_{n}}^{-}(U_n) $, 
then we extend them forward by a new edge to reach $ U_n $ and get a contradiction to Claim \ref{clm: no lot trail to Un}.
\end{proof}
\begin{corollary}\label{cor: On grows}
$ O_n\subseteq O_{n+1} $ for each $ n<\omega $.
\end{corollary}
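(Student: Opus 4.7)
The plan is to argue that the witness for $v\in O_n$ survives, up to removing fewer than $\kappa$ trails, as a witness for $v\in O_{n+1}$. Concretely, suppose $v\in O_n$ and fix a $\kappa$-set $\mathcal{T}$ of $v$-joint $\mathcal{L}_n$-augmenting trails. I would apply Corollary \ref{cor: less than k lose} directly to $\mathcal{T}$: it tells us that the subset
\[
\mathcal{T}':=\{T\in\mathcal{T}: T\text{ is an }\mathcal{L}_{n+1}\text{-augmenting trail}\}
\]
differs from $\mathcal{T}$ by fewer than $\kappa$ trails.

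Since $\kappa$ is regular and $|\mathcal{T}|=\kappa$, we get $|\mathcal{T}'|=\kappa$. Being $v$-joint is closed under taking subsets, so $\mathcal{T}'$ is still $v$-joint. Moreover, because every trail in $\mathcal{T}'$ is $\mathcal{L}_{n+1}$-augmenting, each such trail terminates in $\widehat{B}_{n+1}$, so in particular $v\in\widehat{B}_{n+1}$. Combined with Observation \ref{obs: A hat unchanged} giving $\widehat{A}_{n+1}=\widehat{A}$ (so $|\widehat{A}_{n+1}|=\kappa$), the family $\mathcal{T}'$ is exactly a witness that $v\in O_{n+1}$, proving the inclusion.

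There is essentially no obstacle: the content is entirely packaged into Corollary \ref{cor: less than k lose}, and the remaining observations (regularity of $\kappa$ to keep cardinality, closure of the $v$-joint property under subsets, and the automatic fact that $v\in\widehat{B}_{n+1}$ because augmenting trails terminate there) are trivial. The only thing to be mindful of is invoking the correct cardinal arithmetic step---using regularity of $\kappa$ to conclude $|\mathcal{T}'|=\kappa$ after removing a set of size $<\kappa$---which is the standard mechanism already used throughout Lemma \ref{lem: key lemma}.
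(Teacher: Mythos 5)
Your proposal is correct and follows essentially the same route as the paper: invoke Corollary \ref{cor: less than k lose} to discard the fewer than $\kappa$ trails that fail to remain $\mathcal{L}_{n+1}$-augmenting, and observe that the remaining $\kappa$-many $v$-joint trails witness $v\in O_{n+1}$. (Your extra remark that $v\in\widehat{B}_{n+1}$ because augmenting trails terminate there is a detail the paper leaves implicit, and removing a set of size $<\kappa$ from a $\kappa$-set keeps cardinality $\kappa$ by plain cardinal arithmetic, so regularity is not actually needed at that step.)
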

\begin{proof}
Suppose that  $v\in  O_n $ and let $ \mathcal{T} $ be a witness for this. As earlier, let $\mathcal{T}' $ consists of those $ 
T\in \mathcal{T} $  that are not $ \mathcal{L}_{n+1} $-augmenting trails. Then $ \left|\mathcal{T}'\right|<\kappa $ by 
Corollary \ref{cor: less than k lose}. Hence $ \left|\mathcal{T}\setminus \mathcal{T}'\right|=\kappa $ and therefore $ 
\mathcal{T}\setminus \mathcal{T}' $ witnesses $ v\in O_{n+1} $.
\end{proof}

\begin{corollary}\label{cor: On small}
$ \left|O_n\right|<\kappa $ for each $ n<\omega $.
\end{corollary}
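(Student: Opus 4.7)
The plan is to argue by contradiction and build, from a putative witness for $\left|O_n\right|\geq \kappa$, a $\kappa$-set of pairwise disjoint $\mathcal{L}_n$-augmenting trails, which directly contradicts $(i_n)$ of Lemma \ref{lem: key lemma}. The construction is by transfinite recursion on $\kappa$: for each $v\in O_n$ fix a $\kappa$-set $\mathcal{T}_v$ of $v$-joint $\mathcal{L}_n$-augmenting trails, and at stage $\alpha<\kappa$ select some $T_\alpha$ in $\bigcup_{v\in O_n}\mathcal{T}_v$ that is disjoint from all $T_\beta$ with $\beta<\alpha$.

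The key book-keeping is the following. Let $X:=\bigcup_{\beta<\alpha}V(T_\beta)$. Each chosen trail is a (finite) trail, so $\left|X\right|\leq \left|\alpha\right|\cdot \aleph_0<\kappa$ by the regularity and uncountability of $\kappa$. First I would use the assumption $\left|O_n\right|\geq\kappa$ together with $\left|X\right|<\kappa$ to pick some $v_\alpha\in O_n\setminus X$. Then, since the trails in $\mathcal{T}_{v_\alpha}$ are $v_\alpha$-joint and $v_\alpha\notin X$, any vertex $u\in X$ lies in at most one member of $\mathcal{T}_{v_\alpha}$; hence at most $\left|X\right|<\kappa$ many trails in $\mathcal{T}_{v_\alpha}$ meet $X$. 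Because $\left|\mathcal{T}_{v_\alpha}\right|=\kappa$, I can choose $T_\alpha\in \mathcal{T}_{v_\alpha}$ with $V(T_\alpha)\cap X=\emptyset$, and this $T_\alpha$ is automatically disjoint from every previously chosen $T_\beta$.

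Running this recursion to $\kappa$ yields a $\kappa$-set of disjoint $\mathcal{L}_n$-augmenting trails, contradicting $(i_n)$ of Lemma \ref{lem: key lemma}; thus $\left|O_n\right|<\kappa$.

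The main obstacle I foresee is precisely the step of selecting $v_\alpha\in O_n\setminus X$: the $v$-joint witnesses $\mathcal{T}_v$ individually give us lots of trails terminating at a single vertex, but they cannot be combined naively because the interior of a trail chosen at an earlier stage might already contain $v$, ruining $v$-jointness upon re-use. The hypothesis $\left|O_n\right|=\kappa$ is exactly what lets us sidestep this by freely changing the terminal vertex; this is why the statement is about $\left|O_n\right|<\kappa$ rather than a bound on any single $\mathcal{T}_v$.
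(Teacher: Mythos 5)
Your proof is correct, but it takes a genuinely different and more direct route than the paper's. You fix an arbitrary $n$, assume $\left|O_n\right|\geq\kappa$, and extract a $\kappa$-set of pairwise disjoint $\mathcal{L}_n$-augmenting trails by transfinite recursion; the key observation that a $v$-joint family meets a forbidden vertex set $X$ of size $<\kappa$ in at most $\left|X\right|$ of its members (because distinct members share only $v$, which you arrange to lie outside $X$) is exactly right, and the conclusion contradicts $(i_n)$ of Lemma \ref{lem: key lemma} with nothing further needed. The paper instead argues by induction on $n$: the base case uses $O_0\subseteq\widehat{B}_0$ with $\left|\widehat{B}_0\right|<\kappa$, and the inductive step takes a $\kappa$-subset $C$ of $O_{n+1}\setminus O_n$, builds (by the same kind of transfinite recursion you use) a $\kappa$-set of disjoint $\mathcal{L}_{n+1}$-alternating trails ending in $C$, and then, rather than citing $(i_{n+1})$, pushes these trails back to level $n$ via Lemma \ref{lem: remains alternating trail} and reaches a contradiction with $(ii_n)$ or with Claim \ref{clm: no lot trail to Un} according to whether $\kappa$ many of them meet $W_n$. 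Your argument is shorter and uniform in $n$ (no induction, no use of $(ii_n)$, $W_n$ or the claim), at the price of leaning entirely on the already-proved $(i_n)$; the paper's route essentially re-derives the relevant instance of $(i_{n+1})$ inside the corollary, which buys self-containedness of the inductive step but no additional generality. One presentational remark: your closing paragraph about the ``main obstacle'' is a heuristic aside rather than part of the argument --- the selection of $v_\alpha\in O_n\setminus X$ is already fully justified by $\left|X\right|<\kappa\leq\left|O_n\right|$, so you could safely omit it.
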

\begin{proof}
We apply induction on $ n $. For $ n=0 $ the statement holds because $ O_0\subseteq \widehat{B}_0 $ and $ 
\left|\widehat{B}_0\right|<\kappa $ by assumption. Assume we already know that $ \left|O_n\right|<\kappa $ for some $ 
n<\omega $.  Suppose for a 
contradiction that $ \left|O_{n+1}\right| \geq \kappa $. Then, by the induction hypotheses we must have $ 
\left|O_{n+1}\setminus O_n\right| \geq \kappa $.  Let  $ C $ be a $ \kappa  $-subset of $ O_{n+1}\setminus O_n $. By using 
the
definition of $ O_{n+1} $, we can find by a straightforward transfinite recursion a $ \kappa $-set $ \mathcal{T} $ of disjoint $ 
\mathcal{L}_{n+1} $-alternating trails with $ \mathsf{ter}(\mathcal{T})=C $. Note that $  
O_{n+1}\setminus O_n \subseteq 
\widehat{B}_{n+1}\setminus\widehat{B}_{n}\subseteq N^{-}_{D_n}(U_n)  $ by construction. Assume first that $ \kappa $ many $ 
T\in 
\mathcal{T} $ 
meets $ W_n $. Then by taking suitable initial 
segments of these trails, we obtain a $ \kappa $-set $ \mathcal{T}' $ of $ \mathcal{L}_{n+1} $-alternating 
$ \widehat{A}W_n $-trails. By (2) of Lemma \ref{lem: remains alternating trail}, these are also $ \mathcal{L}_n $-alternating 
trails. But this contradicts  $ (ii_n) $ of Lemma \ref{lem: key lemma}. Thus we may assume that no $ T\in \mathcal{T} $ 
meets $ W_n $. By (2) of Lemma \ref{lem: remains alternating trail}, each $ T\in \mathcal{T} $ is an $ \mathcal{L}_n 
$-alternating 
trail as well. Since $ \mathsf{ter}(\mathcal{T})\subseteq  N^{-}_{D_n}(U_n) $, we can extend each $ T\in \mathcal{T} $ 
forward to terminate in $ U_n $. But then the existence of the resulting set of $ \mathcal{L}_n $-augmenting trails 
contradicts Claim \ref{clm: no lot trail to Un}.
\end{proof}

\subsection{The limit partially linked web}
Let $ 
E_\omega:=\bigcap_{n<\omega}E_n $ and $ D_\omega:=(V,E_\omega) $. We define  
$ B_\omega:=\bigcup_{m<\omega} \bigcap_{m<n<\omega} B_n $ and $ \mathcal{P}_\omega:=\bigcup_{m<\omega} 
\bigcap_{m<n<\omega} 
\mathcal{P}_n $.

 In each iteration, each path in the partial linkage was replaced 
by its initial segment. Roughly speaking,  $ 
 \mathcal{P}_\omega $  consists of the shortest versions of each path that we get during the process. Similarly, the sets $ B_n $ are ``getting closer 
 and closer'' to $ A $ in each iteration. The 
 vertices that arrive at $ A $ 
 or become popular at some stage form the set $ B_\omega $.
 
 Let $  \widehat{A}_\omega:=\widehat{A}_{\mathcal{L}_\omega}$, $  
 \widehat{B}_\omega:=\widehat{B}_{\mathcal{L}_\omega}$  and $ O_\omega:=O_{\mathcal{L}_\omega} $.
\begin{observation}\label{obs: hat omega}
$ \widehat{A}_\omega=\widehat{A} $.
\end{observation}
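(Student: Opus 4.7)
The plan is to reduce the statement to the identity $\mathsf{in}(\mathcal{P}_\omega) = \mathsf{in}(\mathcal{P})$, since $\widehat{A}_\omega = A \setminus \mathsf{in}(\mathcal{P}_\omega)$ and $\widehat{A} = A \setminus \mathsf{in}(\mathcal{P})$. By Observation \ref{obs: A hat unchanged}, $\mathsf{in}(\mathcal{P}_n) = \mathsf{in}(\mathcal{P})$ for every $n<\omega$. The inclusion $\mathsf{in}(\mathcal{P}_\omega) \subseteq \mathsf{in}(\mathcal{P})$ is then immediate, because every $Q \in \mathcal{P}_\omega$ belongs to $\mathcal{P}_n$ for some large $n$, and hence $\mathsf{in}(Q) \in \mathsf{in}(\mathcal{P}_n) = \mathsf{in}(\mathcal{P})$.

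For the reverse inclusion I would associate to each $P \in \mathcal{P}$ its canonical \emph{descendant sequence} $P = P_0, P_1, P_2, \dots$, where $P_{n+1}$ is the initial segment of $P_n$ up to its first vertex in $B_{n+1}$. A short induction using the recursive definition of $\mathcal{P}_{n+1}$ shows $P_n \in \mathcal{P}_n$; for this to make sense one only has to check that $\mathsf{ter}(P_n) \in B_{n+1}$, which holds because $\mathsf{ter}(P_n) \in \mathsf{ter}(\mathcal{P}_n)$ is disjoint from $\widehat{B}_n \supseteq U_n$, so $\mathsf{ter}(P_n) \in B_n \setminus (U_n \setminus A) \subseteq B_{n+1}$.

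The key step, and the only place where anything beyond unfolding definitions is used, is a stabilization argument. Since each $P_{n+1}$ is an initial segment of $P_n$, the sequence $|E(P_n)|$ is nonincreasing in $n$, and as it takes values in $\omega$ it must stabilize from some index $N<\omega$ onwards. Equality of lengths then forces $P_n = P_N$ for all $n \geq N$, so $P_N \in \bigcap_{n > N-1} \mathcal{P}_n \subseteq \mathcal{P}_\omega$ with $\mathsf{in}(P_N) = \mathsf{in}(P)$. Ranging $P$ over $\mathcal{P}$ gives $\mathsf{in}(\mathcal{P}) \subseteq \mathsf{in}(\mathcal{P}_\omega)$, which finishes the proof. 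The hardest point is really this stabilization, but it is not hard at all: it is only the observation that a strictly-descending chain of initial segments of a fixed finite path cannot have length $\omega$, so the limit of the process has been reached within finitely many steps for each individual path.
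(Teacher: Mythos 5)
Your proof is correct, and it is precisely the argument the paper leaves implicit (the Observation is stated without proof): initial vertices are preserved at every finite stage by Observation \ref{obs: A hat unchanged}, and since each path in $\mathcal{P}$ is finite, its descending chain of initial segments stabilizes, so every $P\in\mathcal{P}$ leaves a descendant in $\mathcal{P}_\omega$ with the same initial vertex. The stabilization step and the check that each $\mathsf{ter}(P_n)$ lies in $B_{n+1}$ (so the recursion is well defined) are exactly the right details to supply.
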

\begin{observation}
$ \mathcal{P}_\omega $ is a partial linkage in $ \mathcal{W}_\omega:=(D_\omega,A,B_\omega) $.
\end{observation}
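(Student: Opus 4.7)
The plan is to unpack the eventual-membership conditions built into the definitions $\mathcal{P}_\omega = \bigcup_m\bigcap_{n>m}\mathcal{P}_n$, $B_\omega = \bigcup_m\bigcap_{n>m} B_n$, and $E_\omega = \bigcap_n E_n$, and then to verify in turn each clause in the definition of a partial linkage in $\mathcal{W}_\omega = (D_\omega, A, B_\omega)$. The preparatory step is a short induction on $n$ showing that $\mathcal{P}_n$ is a partial linkage in the web $\mathcal{W}_n := (D_n, A, B_n)$; in particular, $\mathcal{W}_n$ is a web (no vertex of $B_n$ has outgoing edges in $E_n$), and every $P \in \mathcal{P}_n$ is an $AB_n$-path in $D_n$, so its only vertex in $B_n$ is $\mathsf{ter}(P)$ and no internal vertex lies in $A$. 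I would also record that $(E_n)$ is decreasing. From the $\bigcup\bigcap$-definitions it is then immediate that for every $P \in \mathcal{P}_\omega$ there is some $m_P < \omega$ with $P \in \mathcal{P}_n$ for all $n > m_P$, and likewise for each $v \in B_\omega$.

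These observations let me first check that $\mathcal{W}_\omega$ is a web: vertices in $A$ had no ingoing edges even in $D$, and each $v \in B_\omega$ eventually lies in $B_n$, hence has no $E_n$-outgoing edges for all sufficiently large $n$, hence none in $E_\omega$. Then for each $P \in \mathcal{P}_\omega$, fixing $m := m_P$, I would verify that $P$ is an $AB_\omega$-path in $D_\omega$: $E(P) \subseteq E_n$ for every $n > m$ yields $E(P) \subseteq E_\omega$ by monotonicity; $\mathsf{in}(P) \in A$ and no internal vertex of $P$ lies in $A$, since this already holds in each $\mathcal{P}_n$; and $\mathsf{ter}(P) \in B_n$ for every $n > m$ gives $\mathsf{ter}(P) \in B_\omega$. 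Pairwise disjointness of any two $P, Q \in \mathcal{P}_\omega$ follows by picking $n$ past both $m_P, m_Q$ and invoking disjointness of $\mathcal{P}_n$.

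The last clause to verify is that no internal vertex $w$ of $P \in \mathcal{P}_\omega$ lies in $B_\omega$. I would argue by contradiction: were such a $w$ to exist, choose $n > m_P$ large enough that also $w \in B_n$; but $P \in \mathcal{P}_n$ is an $AB_n$-path, so its only vertex in $B_n$ is $\mathsf{ter}(P)$, forcing $w = \mathsf{ter}(P)$ and contradicting internality. I expect no serious obstacle beyond this small indexing argument; everything else is routine bookkeeping with the stabilization indices $m_P$.
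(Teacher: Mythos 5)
Your verification is correct and is exactly the routine unfolding of the $\bigcup_m\bigcap_{n>m}$ definitions that the paper has in mind: the paper states this as an Observation with no proof at all, treating it as immediate. All the points you check (monotonicity of $(E_n)$, the stabilization indices $m_P$, and that internal vertices avoid $B_\omega$ because each $P$ is an $AB_n$-path for large $n$) are the right ones, so there is nothing to add.
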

\begin{lemma}\label{lem: B omega in A small}
$ \left|\widehat{B}_\omega \cap \widehat{A}\right|<\kappa $.
\end{lemma}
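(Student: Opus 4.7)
The plan is to express $\widehat{B}_\omega\cap\widehat{A}$ as a countable union of sets each of cardinality less than $\kappa$, and then invoke the regularity and uncountability of $\kappa$ to conclude. Concretely, unwinding the definition $B_\omega=\bigcup_{m<\omega}\bigcap_{m<n<\omega}B_n$ yields the inclusion
\[
\widehat{B}_\omega\cap\widehat{A}\;\subseteq\;B_\omega\cap\widehat{A}\;=\;\bigcup_{m<\omega}\bigcap_{m<n<\omega}\bigl(B_n\cap\widehat{A}\bigr).
\]

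Next I would verify that $B_n\cap\widehat{A}=\widehat{B}_n\cap\widehat{A}$ for every $n<\omega$. By Observation \ref{obs: A hat unchanged} we have $\widehat{A}_n=\widehat{A}$, so in particular $\mathsf{in}(\mathcal{P}_n)=\mathsf{in}(\mathcal{P})$. Any $v\in\widehat{A}$ lies in $A$ and therefore has no incoming edge in $D$; since each $P\in\mathcal{P}_n$ is an initial segment of a path in $\mathcal{P}$, the only way to have $v=\mathsf{ter}(P)$ would be for $P$ to be the trivial one-vertex path $v$, forcing $v\in\mathsf{in}(\mathcal{P})$ and contradicting $v\in\widehat{A}$. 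Hence $v\notin\mathsf{ter}(\mathcal{P}_n)$, giving the claimed equality. Corollary \ref{cor: few arrives to A} then provides $|\widehat{B}_n\cap\widehat{A}|<\kappa$ for every $n$, and since $\bigcap_{m<n<\omega}(\widehat{B}_n\cap\widehat{A})\subseteq\widehat{B}_{m+1}\cap\widehat{A}$, every intersection appearing in the union above has cardinality less than $\kappa$.

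To finish, I would use the fact that a regular uncountable $\kappa$ has cofinality strictly greater than $\aleph_0$, so any union of $\aleph_0$ sets of cardinality less than $\kappa$ is itself of cardinality less than $\kappa$. Applying this to the decomposition above yields the desired bound $|\widehat{B}_\omega\cap\widehat{A}|<\kappa$. I do not anticipate any real obstacle: the argument is essentially bookkeeping, combining a previous quantitative estimate with a countable-sum-below-$\kappa$ fact. The only mildly subtle step is the observation that no vertex of $\widehat{A}$ can appear as a terminal vertex of a path in any $\mathcal{P}_n$, which is immediate from the $AB$-path structure and the absence of incoming edges at $A$-vertices.
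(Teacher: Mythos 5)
Your proposal is correct and follows essentially the same route as the paper: decompose $\widehat{B}_\omega\cap\widehat{A}$ via the countable union defining $B_\omega$, bound each piece by Corollary \ref{cor: few arrives to A}, and finish with the regularity and uncountability of $\kappa$. The only difference is cosmetic: the paper asserts $\widehat{B}_\omega\cap\widehat{A}=\bigcup_{n<\omega}(\widehat{B}_n\cap\widehat{A})$ ``by construction,'' whereas you spell out the small supporting fact that no vertex of $\widehat{A}$ is a terminal vertex of any $\mathcal{P}_n$, and you correctly note that the inclusion $\subseteq$ already suffices.
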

\begin{proof}
By construction $ \widehat{B}_\omega \cap \widehat{A}=\bigcup_{n<\omega}(\widehat{B}_n \cap \widehat{A})  $ and we 
know that    $ \left|\widehat{B}_n \cap 
\widehat{A}\right|<\kappa $ for each $ n<\omega $ (see Corollary \ref{cor: few arrives to A}). Since $ 
\kappa $ is a 
regular uncountable cardinal, we conclude that $\left| \widehat{B}_\omega \cap \widehat{A}\right|= 
\left|\bigcup_{n<\omega}(\widehat{B}_n \cap \widehat{A})\right|<\kappa $. 
\end{proof}
\begin{lemma}\label{lem: B omega separates}
$ B_\omega $ separates $ B $ from $ A $ in $ D $.
\end{lemma}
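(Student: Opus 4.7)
The plan is to argue path-by-path. Fix an arbitrary $ AB $-path $ P = a_0 a_1 \dots a_m $ in $ D $ (so $ a_0 \in A $, $ a_m \in B $), and for each $ n < \omega $ define $ j_n $ to be the smallest index $ j $ with $ a_j \in B_n $. By Observation \ref{obs: separates n} the set $ B_n $ meets $ P $, so $ j_n $ is a well-defined element of $ \{0, 1, \dots, m\} $. The whole proof hinges on showing that the sequence $ (j_n)_{n < \omega} $ is weakly decreasing; once this is known, the bounded integer sequence stabilizes at some $ j^{*} $, and then $ a_{j^{*}} \in B_n $ for all sufficiently large $ n $ places $ a_{j^{*}} $ in $ B_\omega $, so $ P $ meets $ B_\omega $.

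To prove $ j_{n+1} \le j_n $, I would split into cases on whether $ a_{j_n} $ survives into $ B_{n+1} $. If $ a_{j_n} \in B_{n+1} $, we are done. Otherwise the defining equation $ B_{n+1} = (B_n \setminus (U_n \setminus A)) \cup N_{D_n}^{-}(U_n) $ forces $ a_{j_n} \in U_n \setminus A $; in particular $ a_{j_n} \notin A $, so $ j_n \ge 1 $ and the predecessor $ a_{j_n-1} $ exists. Examine the edge $ e := a_{j_n-1} a_{j_n} \in E $. If $ e \in E_n $, then since $ a_{j_n} \in U_n $ and $ a_{j_n-1} \notin B_n \supseteq U_n $ (by the minimality of $ j_n $), we get $ a_{j_n-1} \in N_{D_n}^{-}(U_n) \subseteq B_{n+1} $, whence $ j_{n+1} \le j_n - 1 $. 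If instead $ e \in E_k \setminus E_{k+1} $ for some $ k < n $, then by the construction of $ E_{k+1} $ the tail of the deleted edge lies in $ N_{D_k}^{-}(U_k) $, so $ a_{j_n-1} \in B_{k+1} \subseteq \bigcup_{l \le n} B_l $. But then the initial segment $ a_0 \dots a_{j_n-1} $ of $ P $ contains an $ A \bigl( \bigcup_{l \le n} B_l \bigr) $-path in $ D $, which by Observation \ref{obs: separates n} must meet $ B_n $; this contradicts the minimality of $ j_n $, so this second subcase never occurs.

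I expect the only nontrivial point to be exactly this last observation: every edge of $ D $ removed during the recursion has its tail in some $ B_{k+1} $, so the stronger form of the separation in Observation \ref{obs: separates n} (covering $ \bigcup_{l \le n} B_l $, not merely $ B $) is tailor-made to rule out such deleted edges appearing on $ P $ before the first vertex in $ B_n $. After that is noticed, what remains is a routine finite-valued monotone convergence argument to conclude that $ (j_n) $ stabilizes inside $ B_\omega $.
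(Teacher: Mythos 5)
Your proof is correct, but it takes a longer route than the paper's. The paper also fixes an $AB$-path $P$, but instead of tracking the per-$n$ index $j_n$ it takes the single vertex $v$ that is the \emph{first} vertex of $P$ lying in $\bigcup_{n<\omega}B_n$, picks $m$ with $v\in B_m$, and then applies Observation \ref{obs: separates n} once for each $k\geq m$: since $B_k$ separates $B_m$ from $A$ in $D$, the initial segment of $P$ up to $v$ must meet $B_k$, and by the choice of $v$ that meeting point can only be $v$ itself; hence $v\in B_k$ for all $k\geq m$ and so $v\in B_\omega$. Your argument replaces this with a monotonicity claim $j_{n+1}\leq j_n$ proved by a case analysis on whether the edge $a_{j_n-1}a_{j_n}$ survives into $E_n$, which forces you to reopen the definitions of $B_{n+1}$, $U_n$, $N^-_{D_n}(U_n)$ and the edge deletions. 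That case analysis is sound (including your use of the strengthened form of Observation \ref{obs: separates n} to exclude previously deleted edges before $a_{j_n}$), but it is avoidable: $j_{n+1}\leq j_n$ already follows directly from the fact that $B_{n+1}$ separates $B_n$ from $A$ in $D$, applied to the $AB_n$-path $a_0\dots a_{j_n}$, with no reference to the recursion's internals. What your version buys is a slightly more quantitative statement (the first meeting index is weakly decreasing in $n$, not merely eventually constant at the globally first vertex); what the paper's version buys is brevity and complete independence from how $B_{n+1}$ and $E_{n+1}$ are actually built, relying only on the separation property already recorded in Observation \ref{obs: separates n}.
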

\begin{proof}
Let $ P $ be an $ AB $-path in $ D $. By Observation \ref{obs: separates n},  $ P $ meets $ B_n $ for each $ n $. Let $ 
v $ be 
the 
first vertex of $ P $ in $ 
\bigcup_{n<\omega}B_n $. Then there is an $ m<\omega $ with $ v\in B_m $. For 
every $ k \geq m $ the set $ B_k $ separates $ B_m $ from $ A $ (see Observation \ref{obs: separates n}). By the choice of $ v $ this implies that 
we must have $ v\in B_k $ for every $ k \geq m $. But then by definition $ v\in B_\omega $.
\end{proof}

\begin{lemma}\label{lem: B hat minus A hat}
$ \widehat{B}_\omega\setminus \widehat{A}=\bigcup_{n<\omega}O_n $.
\end{lemma}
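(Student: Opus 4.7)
The plan is to prove both inclusions directly from the recursive construction, relying on Corollary \ref{cor: On grows} for the forward direction and on the one-step structure of $B_{n+1}$ for the reverse. A useful preliminary is that $O_n \cap \widehat{A} = \emptyset$ for every $n$. Indeed, take $v \in \widehat{A}$. By Observation \ref{obs: A hat unchanged} we have $v \notin \mathsf{in}(\mathcal{P}_n)$; since $v \in A$ has no incoming $D$-edges, $v$ cannot be an internal vertex of any path of $\mathcal{P}_n$, and so $v$ has no outgoing edge in $E(\mathcal{P}_n)$ either. Therefore no edge of $D_n^{*}$ enters $v$, and the only $\mathcal{L}_n$-augmenting trail terminating at $v$ is the trivial trail $v$; this cannot support a $v$-joint $\kappa$-set for $\kappa \geq \aleph_1$.

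For the inclusion $\bigcup_{n<\omega} O_n \subseteq \widehat{B}_\omega \setminus \widehat{A}$, pick $v \in O_n$. By Corollary \ref{cor: On grows}, $v \in O_{n'} \subseteq \widehat{B}_{n'}$ for every $n' \geq n$, so $v \in B_{n'}$ and $v \notin \mathsf{ter}(\mathcal{P}_{n'})$ for all such $n'$; the first gives $v \in B_\omega$. If $v = \mathsf{ter}(P)$ with $P \in \mathcal{P}_\omega$, then $P$ lies in $\mathcal{P}_k$ for all $k$ past some $m$, so $v \in \mathsf{ter}(\mathcal{P}_k)$ for $k > \max(n,m)$, a contradiction. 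Thus $v \in \widehat{B}_\omega$, and the preliminary gives $v \notin \widehat{A}$.

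For the reverse inclusion, let $v \in \widehat{B}_\omega \setminus \widehat{A}$. I first show $v \in \widehat{B}_n$ for all sufficiently large $n$. The condition $v \in B_\omega$ already yields $v \in B_n$ for $n$ past some $m$. For the terminal side, for each $P \in \mathcal{P}$ its successive images $P_n \in \mathcal{P}_n$ are monotonically shrinking initial segments of $P$, so $\mathsf{ter}(P_n)$ walks monotonically towards $\mathsf{in}(P)$ along $P$; if $\mathsf{ter}(P_n) = v$ for infinitely many $n$, then $P_n$ stabilises at the initial segment of $P$ ending at $v$, placing this segment in $\mathcal{P}_\omega$ and contradicting $v \notin \mathsf{ter}(\mathcal{P}_\omega)$. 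Hence $v \notin \mathsf{ter}(\mathcal{P}_n)$ for all large $n$, so $v \in \widehat{B}_n$ for large $n$.

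Next I rule out $v \in A$: if $v \in A$, then $v \notin \widehat{A}$ forces $v \in \mathsf{in}(\mathcal{P})$, and the path $P \in \mathcal{P}$ starting at $v$ satisfies $P_n = \{v\}$ as soon as $v \in B_n$ (since $v = \mathsf{in}(P_{n-1})$ throughout); hence $\{v\} \in \mathcal{P}_\omega$, contradicting $v \notin \mathsf{ter}(\mathcal{P}_\omega)$. Finally, with $v \notin A$ and $v \in \widehat{B}_n$ for $n \geq n_0$, suppose toward contradiction that $v \in U_n$. Then $v \in U_n \setminus A$ is removed in the first summand of $B_{n+1} = (B_n \setminus (U_n \setminus A)) \cup N_{D_n}^{-}(U_n)$, while $v \in U_n$ excludes $v$ from $N_{D_n}^{-}(U_n)$ by definition of $N^{-}$; so $v \notin B_{n+1}$, contradicting $v \in B_\omega$. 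Hence $v \in \widehat{B}_n \setminus U_n = O_n$ for all large $n$. The principal care point in the whole argument is the case distinction around $v \in A$, which is precisely where the hypothesis $v \notin \widehat{A}$ gets used; everything else is a clean unwinding of the definitions together with the already-proved monotonicity of $O_n$.
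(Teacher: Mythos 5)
Your proof is correct and follows essentially the same route as the paper: one inclusion via Corollary \ref{cor: On grows} together with $O_n\cap\widehat{A}=\emptyset$, the other by observing that a vertex of $U_n\setminus A$ is expelled from $B_{n+1}$ and hence cannot survive into $B_\omega$. You are in fact slightly more careful than the paper at the point where $v\in A$ must be excluded (the paper's ``$v\in U_m\setminus\widehat{A}$ implies $v\notin\widehat{B}_{m+1}$'' tacitly uses that $\widehat{B}_m\setminus\widehat{A}$ is disjoint from $A$), and your explicit treatment of that case and of the stabilisation of $\mathsf{ter}(P_n)$ is a welcome filling-in of details.
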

\begin{proof}
Suppose that $ v\in  \widehat{B}_\omega\setminus \widehat{A}$. Then by the definition of $ \widehat{B}_\omega $  there 
is an $m<\omega $ such that $ 
v \in 
\widehat{B}_n\setminus 
\widehat{A} $ for $ n\geq m $. By definition $  \widehat{B}_m= U_m\cup O_m
 $,  therefore $ v\in U_m\cup O_m $. But $ v \notin U_m $ since otherwise $ v\in U_m\setminus \widehat{A} $ and 
 therefore $ 
v\notin \widehat{B}_{m+1} $ which contradicts the fact that $ v \in 
\widehat{B}_n\setminus 
\widehat{A} $ for $ n\geq m $. Hence $ v \in O_m\subseteq  \bigcup_{n<\omega}O_n$.

Assume that $ v\in \bigcup_{n<\omega}O_n $. Then there is an $ m<\omega $ with $ v\in O_m $. It follows that $ v\notin \widehat{A} $ because 
$ O_m\cap \widehat{A}=\emptyset $ is evident from the definition of popular vertices and the fact that $ \left|\widehat{A}\right|=\kappa>1 $. By 
Corollary \ref{cor: On grows} we know that $ v\in O_n 
\subseteq 
\widehat{B}_n $ 
for each $ n \geq m $. But then by definition $ v \in \widehat{B}_{\omega} $, thus $ v\in \widehat{B}_\omega\setminus \widehat{A} $.
\end{proof}
\begin{corollary}\label{cor: B hat minus A hat omega small}
$ \left|\widehat{B}_\omega\setminus \widehat{A}\right|<\kappa $
\end{corollary}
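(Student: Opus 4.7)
The plan is to chain together the two immediately preceding results with the regularity of $\kappa$. First I would invoke Lemma \ref{lem: B hat minus A hat} to rewrite the set in question as a countable union, namely $\widehat{B}_\omega\setminus \widehat{A}=\bigcup_{n<\omega}O_n$. Next I would apply Corollary \ref{cor: On small} to conclude that each summand $O_n$ has cardinality strictly less than $\kappa$.

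Finally, since we are working under the standing assumption that $\kappa=|\widehat{A}|\geq \aleph_1$ is a regular uncountable cardinal, a countable union of sets of cardinality $<\kappa$ still has cardinality $<\kappa$. Applying this to $\bigcup_{n<\omega}O_n$ gives the desired bound. There is no real obstacle here; the work was done in establishing Corollary \ref{cor: On small} (which in turn relied on Lemma \ref{lem: key lemma}), and this corollary is simply the passage to the limit stage, made painless by the regularity assumption secured at the start of the subsection on eliminating unpopular vertices.
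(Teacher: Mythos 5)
Your proof is correct and matches the paper's argument exactly: both decompose $\widehat{B}_\omega\setminus\widehat{A}$ as $\bigcup_{n<\omega}O_n$ via Lemma \ref{lem: B hat minus A hat}, bound each $\left|O_n\right|<\kappa$ by Corollary \ref{cor: On small}, and conclude via the regularity and uncountability of $\kappa$. Nothing to add.
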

\begin{proof}
We have just seen that $ \widehat{B}_\omega\setminus 
\widehat{A}=\bigcup_{n<\omega}O_n $ (Lemma \ref{lem: B hat minus A hat}). Furthermore,   $ \left|O_n\right|<\kappa $ 
for each $ 
n<\omega 
$ (see Corollary \ref{cor: On small}). As $ 
\kappa $ is a 
regular uncountable cardinal, we conclude that $  \left|\widehat{B}_\omega\setminus 
\widehat{A}\right|=\left|\bigcup_{n<\omega}O_n\right|<\kappa $.
\end{proof}
\begin{observation}\label{obs: augmenting at the end}
If $ T $ is an $ \mathcal{L}_n $-augmenting trail for every large enough $ n $, then $ T $ is an $ \mathcal{L}_\omega 
$-augmenting trail.
\end{observation}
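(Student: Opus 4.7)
The key enabling fact is that $T$ is a \emph{finite} trail, so only finitely many vertices and edges are involved, and only finitely many paths of $\mathcal{P}$ are touched. My plan is to show that each of those paths stabilizes at some finite stage $N'$, after which the $\mathcal{L}_n$-structure agrees with the $\mathcal{L}_\omega$-structure on $V(T)\cup E(T)$; then the four defining properties of an alternating trail plus the terminal condition transfer individually.

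Fix $N$ such that $T$ is an $\mathcal{L}_n$-augmenting trail for every $n\geq N$. For each path $P\in \mathcal{P}$ that meets $V(T)$, the sequence of initial segments $P^{(n)}\in \mathcal{P}_n$ is weakly decreasing in length by the construction of $\mathcal{P}_{n+1}$, and since $P^{(N)}$ is finite it stabilizes at some $N_P\geq N$. Let $N'\geq N$ dominate all finitely many such $N_P$; then $P^{(N')}=P^{(n)}$ for every $n\geq N'$ and hence $P^{(N')}\in \bigcap_{n\geq N'}\mathcal{P}_n\subseteq \mathcal{P}_\omega$. In particular $V(\mathcal{P}_n)$ and $E(\mathcal{P}_n)$ are weakly decreasing in $n$, and their restrictions to the finitely many objects touched by $T$ coincide with those of $V(\mathcal{P}_\omega)$ and $E(\mathcal{P}_\omega)$.

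Now I would verify the four alternating-trail conditions with respect to $\mathcal{L}_\omega$. For property (1), each forward edge of $T$ belongs to $E_n$ for every $n\geq N$ and hence to $E_\omega=\bigcap_n E_n$ (since $(E_n)$ is decreasing), while each reversed edge of $T$ reverses an edge of some stabilized $P^{(N')}\in \mathcal{P}_\omega$, so $T$ is a trail in $D_\omega^*$. Property (2) is immediate from $\widehat{A}_n=\widehat{A}=\widehat{A}_\omega$ by Observations \ref{obs: A hat unchanged} and \ref{obs: hat omega}. For (3) and (4), any repeated or ``forward-arrival'' vertex $v_i$ of $T$ lies in $V(\mathcal{P}_n)$ for every $n\geq N$, so by stabilization $v_i\in V(P^{(N')})\subseteq V(\mathcal{P}_\omega)$, and the unique backward edge from $v_i$ in $D_n^*$ coincides with the unique backward edge from $v_i$ in $D_\omega^*$.

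Finally, $\mathsf{ter}(T)\in B_n$ for every $n\geq N'$ gives $\mathsf{ter}(T)\in \bigcap_{n\geq N'}B_n\subseteq B_\omega$ by the definition of $B_\omega$; and if $\mathsf{ter}(T)\in \mathsf{ter}(\mathcal{P}_\omega)$, some $Q\in \mathcal{P}_\omega$ with $\mathsf{ter}(Q)=\mathsf{ter}(T)$ would lie in $\mathcal{P}_n$ for all sufficiently large $n$, contradicting $\mathsf{ter}(T)\notin \mathsf{ter}(\mathcal{P}_n)$. Hence $\mathsf{ter}(T)\in \widehat{B}_\omega$ and $T$ is $\mathcal{L}_\omega$-augmenting. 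I expect no genuine obstacle: the whole argument is a routine stabilization-and-bookkeeping exercise enabled by the finiteness of $T$, and the only mild subtlety is tracking, at each touched vertex of $T$, that the relevant path of $\mathcal{P}_n$ is the stabilized path of $\mathcal{P}_\omega$.
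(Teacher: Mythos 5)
Your proof is correct and follows essentially the same route as the paper: the paper disposes of the observation in one sentence by noting that any violation of the defining properties w.r.t.\ $\mathcal{L}_\omega$ would already be a violation w.r.t.\ $\mathcal{L}_n$ for all large $n$, which is exactly the contrapositive of your stabilization argument. You merely spell out the underlying reason (finiteness of $T$ forces the touched paths, edges and sets $B_n$ to stabilize), which the paper leaves implicit.
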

\begin{proof}
From the construction of $ \mathcal{L}_\omega $ it is clear that if  any of the properties (\ref{item: reversed edges})-(\ref{item go back rule}) is 
violated by $ T $ w.r.t. $ \mathcal{L}_\omega $, then it is violated w.r.t. $ \mathcal{L}_n $ for every large enough $ n $. 
\end{proof}
\begin{lemma}\label{lem: popular at the end}
$ \widehat{B}_{\omega}\setminus 
\widehat{A}=O_\omega $.
\end{lemma}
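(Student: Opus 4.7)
The plan is to establish $\widehat{B}_\omega \setminus \widehat{A} = O_\omega$ by proving the two inclusions separately, drawing on Lemma \ref{lem: B hat minus A hat} for the interesting direction and on the collection of corollaries already accumulated about how popularity behaves under one step of the recursion.

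For the easy direction $O_\omega \subseteq \widehat{B}_\omega \setminus \widehat{A}$: the containment $O_\omega \subseteq \widehat{B}_\omega$ is built into the definition of popular vertex, so it suffices to exclude $\widehat{A}$. I would argue that every $v \in \widehat{A} \subseteq A$ has no incoming edges in $D$, and because $v \notin \mathsf{in}(\mathcal{P}_\omega)$ no path of $\mathcal{P}_\omega$ contains $v$, so no edge of $E(\mathcal{P}_\omega)$ is reversed into $v$ either. Consequently $v$ has no incoming edges in $D_\omega^{*}$, so the only $\mathcal{L}_\omega$-alternating trail terminating at $v$ is the trivial trail $v$. This produces at most one $v$-joint $\mathcal{L}_\omega$-augmenting trail, which contradicts $|\widehat{A}| = \kappa \geq \aleph_1 > 1$.

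For the main direction $\widehat{B}_\omega \setminus \widehat{A} \subseteq O_\omega$: by Lemma \ref{lem: B hat minus A hat} it suffices to show that $O_n \subseteq O_\omega$ for every $n < \omega$. Given $v \in O_n$ with witness $\mathcal{T}_n$, a $\kappa$-set of $v$-joint $\mathcal{L}_n$-augmenting trails, I would recursively build a descending chain $\mathcal{T}_n \supseteq \mathcal{T}_{n+1} \supseteq \cdots$ by keeping only those trails in $\mathcal{T}_m$ that remain $\mathcal{L}_{m+1}$-augmenting. Corollary \ref{cor: less than k lose} guarantees that at each stage fewer than $\kappa$ trails are discarded. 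Since $\kappa$ is regular and uncountable, the total loss across the $\omega$ stages is still less than $\kappa$, so $\mathcal{T}_\omega := \bigcap_{m \geq n} \mathcal{T}_m$ has cardinality $\kappa$ and, being a subset of each $\mathcal{T}_m$, is still $v$-joint. For every $T \in \mathcal{T}_\omega$, $T$ is an $\mathcal{L}_m$-augmenting trail for all $m \geq n$, so Observation \ref{obs: augmenting at the end} promotes $T$ to an $\mathcal{L}_\omega$-augmenting trail. Hence $\mathcal{T}_\omega$ witnesses $v \in O_\omega$.

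The main technical subtlety is the bookkeeping at the $\omega$-th stage: preserving the $\kappa$-cardinality across the countable sequence of shrinking witness sets is precisely where regularity and uncountability of $\kappa$ are indispensable, and transferring the augmenting-trail property from every finite stage to the limit is exactly what Observation \ref{obs: augmenting at the end} is designed to deliver. Once these two ingredients are in place, no further combinatorial work is needed.
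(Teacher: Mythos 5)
Your proposal is correct and follows essentially the same route as the paper: the reverse inclusion from the definition of $O_\omega$ together with $O_\omega\cap\widehat{A}=\emptyset$, and for the forward inclusion the reduction via Lemma \ref{lem: B hat minus A hat} to showing $O_n\subseteq O_\omega$, using the shrinking chain of witness sets, Corollary \ref{cor: less than k lose}, regularity of $\kappa$, and Observation \ref{obs: augmenting at the end}. The only difference is that you spell out why popular vertices cannot lie in $\widehat{A}$, which the paper treats as evident.
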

\begin{proof}
The inclusion ``$ \supseteq $'' holds because $ \widehat{B}_{\omega}=O_\omega\cup U_\omega $ by definition and $ O_\omega \cap 
\widehat{A}=\emptyset $. To show the inclusion ``$ \subseteq $'', let $ v\in \widehat{B}_{\omega}\setminus 
\widehat{A} $ be given. Since $ \widehat{B}_\omega\setminus \widehat{A}=\bigcup_{n<\omega}O_n $ by Lemma \ref{lem: B hat minus A 
hat}, there is an $ m<\omega $ with $ v\in O_m $. Let $ \mathcal{T}_m $ be a witness for $ v\in O_m $, i.e. a  $ \kappa $-set of $ v $-joint  $ 
\mathcal{L}_m $-augmenting trails. We define by recursion $ \mathcal{T}_n $ for $ n \geq m $. If $ \mathcal{T}_n $ is 
already defined, let $ \mathcal{T}_{n+1} $ consists of those $ 
T\in \mathcal{T}_n $ that are also $ \mathcal{L}_{n+1} $-augmenting trails. Finally, let $ \mathcal{T}_\omega:= 
\bigcap_{n 
\geq m}\mathcal{T}_n 
$. By Observation \ref{obs: augmenting at the end}, $ \mathcal{T}_\omega $ consists of $ \mathcal{L}_\omega 
$-augmenting trails.   We have $ \left|\mathcal{T}_m\right|=\kappa $ by definition and Corollary
\ref{cor: less than k lose} ensures that $ \left|\mathcal{T}_n\setminus \mathcal{T}_{n+1}\right|<\kappa $ for each $ n \geq m 
$.  Since $ \kappa $ is a regular uncountable cardinal, it follows that $ \left|\mathcal{T}_\omega\right|=\kappa $. But then $ 
\mathcal{T}_\omega $ witnesses $ v\in O_\omega $.
\end{proof}
We intend to apply Lemma \ref{lem: everybody popular} to $ \mathcal{L}_\omega $. We have $ 
\widehat{A}_\omega=\widehat{A} $ by Observation \ref{obs: hat omega} where $ \left|\widehat{A} \right|=\kappa$ by 
definition. Since
$ \left|\widehat{B}_\omega \cap \widehat{A}\right|<\kappa $ (Lemma \ref{lem: B omega in A small}) and $ 
\left|\widehat{B}_\omega\setminus 
\widehat{A}\right|<\kappa $ (Corollary \ref{cor: B hat minus A hat omega small}), 
we conclude $ \left|\widehat{B}_\omega\right|<\kappa $. Finally, $ \widehat{B}_{\omega}\setminus 
\widehat{A}=O_\omega $ by Lemma \ref{lem: popular at the end}. It follows that $ 
\mathcal{L}_\omega $ satisfies the conditions of Lemma \ref{lem: everybody popular}. Thus there is a hindrance $ 
\mathcal{H} $ in $ 
\mathcal{W}_\omega:=(D_\omega,A,B_\omega) $ that links a proper subset of $ A 
$ onto $ B_\omega $ in $ D_\omega $. Since $ D_\omega $ is a subgraph of $ D $ and  $ B_\omega $
separates $ B $ 
from $ A $ in $ D $ as well (Lemma \ref{lem: B omega separates}), this $ \mathcal{H} $ is a hindrance in $ \mathcal{W} $ 
too. This completes the proof of Theorem \ref{thm: main}.
\subsection{Proof of Corollary \ref{cor: main corollary}}
Let $ D $ be the orientation of $ G $, in which all the
edges are pointing towards $ B $. Note that the vertex covers of $ G $ are exactly the $ AB $-separators of $ D $. Then a matching $ M $ defines a 
partial linkage $ \mathcal{P} $ in the web $ \mathcal{W}:=(D,A,B) $. The assumption on $ M $ translates to $ \mathcal{P} 
$ being wasteful.   
Then Theorem \ref{thm: main}  ensures that  there 
 is a hindrance $ \mathcal{H} $ in $ \mathcal{W} $. Let  $ 
\mathcal{H}' $ consist of the non-trivial 
paths in $ \mathcal{H} $. Then each $ P\in \mathcal{H}' $ has exactly one edge and $ 
M':=E(\mathcal{H'}) $ is a matching.  Moreover, for $ X:=\mathsf{in}(\mathcal{H}')\cup (A\setminus V(\mathcal{H})) $ we 
have $ 
N_G(X)=\mathsf{ter}(\mathcal{H}') $ because $ \mathsf{ter}(\mathcal{H}) $ is a vertex cover in $ G $. Since $ 
\mathcal{H} $ is a hindrance, we know that $ A\setminus 
V(\mathcal{H})\neq \emptyset  $, therefore  $ 
M' $ matches $ N_{G}(X) $ to a proper subset of $ X $ and thus we conclude that $ X $ is a hindrance.
\section{Outlook}\label{sec: outlook}
Hindrances can be defined in the context of further problems such as the previously mentioned Matroid intersection 
conjecture. For the 
conjecture and the 
corresponding matroidal terminology one may refer to authors such as:  Aharoni \& Ziv \cite{aharoni1998intersection}, 
Bowler \& Carmesin 
\cite{bowler2015matroid} and ourselves 
\cite{joo2021MIC}.   

Suppose that $ M $ and 
$ N $ are finitary matroids on $ E $ and our task is to find an $ M 
$-independent spanning set of $ N $. An $ I $ is an  $ (M,N) $\emph{-wave} if  $I\in \mathcal{I}(M)\cap 
\mathcal{I}(N.\mathsf{span}_M(I)) $.  An $ I $ is an $ (M,N) $\emph{-hindrance} if it an $ (M,N) $\emph{-wave}
that does not span $ N.\mathsf{span}_M(I) $ (see \cite[Definition 3.2]{aharoni1998intersection}). We call $ (M,N) $ 
hindered if there exists an $ (M,N) $-hindrance.

\begin{conjecture}
If $ M $ and $ N $ are finitary matroids on a common ground set such that  there is an $ I\in \mathcal{I}(M) \cap 
\mathcal{I}(N) $ with $ r(M/I)<r(N/I) $, then $ (M,N)$ is hindered.
\end{conjecture}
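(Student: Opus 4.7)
The idea is to translate the proof of Theorem~\ref{thm: main} line by line to the matroidal setting, using the matroid-intersection exchange digraph in place of the auxiliary digraph $D^{*}$.

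\emph{Setup.} Fix a common independent set $I$ as in the hypothesis and set $\kappa := r(N/I) > r(M/I)$. Choose bases $\widehat{A}$ of $N/I$ and $\widehat{B}$ of $M/I$, so that $|\widehat{A}| = \kappa > |\widehat{B}|$; these will play the role of the unlinked endpoints. Build the exchange digraph $D(I)$ on $E$ with arcs $x \to y$ whenever $x \in I$, $y \notin I$ and $I - x + y \in \mathcal{I}(M)$, and arcs $y \to x$ under the symmetric $N$-exchange condition. An \emph{alternating trail} is a finite walk in $D(I)$ starting in $\widehat{A}$, alternating arc types and respecting a rule analogous to (\ref{item go back rule}); it is \emph{augmenting} if it terminates in $\widehat{B}$. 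Finitariness of $M$ and $N$ plus a standard matroid-intersection argument should supply an analog of Lemma~\ref{lem: augmenting mass}: a set of strongly disjoint augmenting trails can be discharged simultaneously to yield a common independent set $I'$ that absorbs all their initial vertices from $\widehat{A}$.

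\emph{Iteration.} Define popular and unpopular elements of $\widehat{B}$ exactly as in the graph case via $\kappa$-sets of $v$-joint augmenting trails, and mirror the elimination phase: in each step, absorb into $I_{n}$ the initial segments of augmenting trails reaching the unpopular set $U_{n}$, producing $I_{n+1} \in \mathcal{I}(M) \cap \mathcal{I}(N)$ and a fresh basis $\widehat{B}_{n+1}$ of $M/I_{n+1}$ chosen from $N_{D(I_{n})}^{-}(U_{n})$. I expect the analogs of Lemma~\ref{lem: key lemma} and Corollaries~\ref{cor: less than k lose}--\ref{cor: On small} to transfer almost verbatim, since their proofs are essentially cardinal arithmetic together with the two ``alternating trails survive'' statements in Lemma~\ref{lem: remains alternating trail}, both of which make sense matroidally. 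In the limit one sets $I_{\omega}$ to be the set of elements that eventually belong to $I_{n}$ for all large $n$; finitariness of both matroids guarantees $I_{\omega} \in \mathcal{I}(M) \cap \mathcal{I}(N)$, since independence of a finite set stabilises in finitely many steps. A matroidal version of Lemma~\ref{lem: everybody popular} -- stating that if every non-$\widehat{A}$ element of the chosen basis of $M/I_{\omega}$ is popular then $I_{\omega}$ can be extended to a proper superset of $\widehat{A} \cup I$ that $N$-spans $\mathsf{span}_{M}(I_{\omega})$ -- would then deliver the sought $(M,N)$-hindrance.

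\emph{Main obstacle.} The hardest step will be the matroidal analog of Observation~\ref{obs: separates n} (``$B_{n+1}$ separates $B_{n}$ from $A$ in $D_{n}$''). In matroids the natural replacement is the Aharoni--Ziv wave framework, where each $I_{n}$ should be the root of a wave whose $M$-span contains the previous one and the limit $I_{\omega}$ must itself be shown to be a wave. Controlling how the exchange circuits of different alternating trails interact inside $I_{n}$ -- the matroidal incarnation of strong disjointness from Observation~\ref{obs: dis to strongly dis} -- is where the translation ceases to be mechanical and will almost certainly require an idea specific to the finitary matroid intersection setting; in particular, each individual circuit being finite is likely indispensable when verifying the ``go back rule'' for the compound exchange produced by a $\kappa$-sized family of trails.
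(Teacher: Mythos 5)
The statement you are addressing is presented in the paper as an open \emph{conjecture}: the paper offers no proof of it, so the only thing to measure your attempt against is the graph-theoretic template of Theorem \ref{thm: main}. Your proposal is a translation plan rather than a proof, and it leaves unresolved precisely the steps at which the translation is known to break down — you say so yourself in your ``main obstacle'' paragraph — so it cannot be accepted as a proof.

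The most serious gap is your claimed analogue of Lemma \ref{lem: augmenting mass}. In the exchange digraph $D(I)$, even a \emph{single} finite augmenting path can fail to yield a larger common independent set unless it is chosen without ``shortcuts'' (this is why the finite matroid intersection proof takes a shortest augmenting path). Disjointness, or strong disjointness, of a family of trails is a statement about which \emph{elements} they use; it does not control the circuits created by performing all the exchanges simultaneously, since a circuit of the resulting set in $M$ or in $N$ may involve elements lying on no trail at all. In the graph setting Lemma \ref{lem: augmenting mass} is local — each trail interacts only with the finitely many paths of $\mathcal{P}$ it meets, and the condition $V(\mathcal{Q}_T)\subseteq V(\mathcal{P}_T)\cup V(T)$ confines the rerouting — whereas matroid dependence is global. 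Carrying out a $\kappa$-sized simultaneous augmentation is essentially the content of the Matroid Intersection Conjecture itself, not a lemma one can import. A second gap: the whole iteration in Section \ref{sec: proof} is anchored by Observation \ref{obs: A hat unchanged}, the fact that $\widehat{A}$ never changes. In your setup $\widehat{A}$ is a base of $N/I$, and every augmentation replaces $I$ by some $I_{n+1}$, changing both quotients; you give no reason why a single $\kappa$-set $\widehat{A}$ persists as (part of) a base of every $N/I_n$ and of $N/I_\omega$. Finally, your closing step does not visibly produce an $(M,N)$-hindrance as defined in the paper: you would need $I_\omega$ (or a set derived from it) to be a wave, i.e.\ independent in $N.\mathsf{span}_M(I_\omega)$, and to fail to span that minor; the separator statements (Observation \ref{obs: separates n}, Lemma \ref{lem: B omega separates}) are what certify the corresponding facts in the graph case, and you explicitly do not know how to replace them. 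Until these three points are settled, the proposal is a restatement of the difficulty rather than a proof.
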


Transforming a so-called $ \kappa $-hindrance (see \cite[p. 35-36]{aharoni2009menger}) to a hindrance is an essential part of 
the proof of the infinite version of Menger's theorem.  Together with an auxiliary tool called ``Bipartite conversion'', it occupies two entire chapters 
in \cite{aharoni2009menger}.

\begin{question}
Is it possible to prove the existence of a hindrance assuming the existence of a $ 
\kappa $-hindrance (\cite[Theorem 7.30]{aharoni2009menger}) directly along the lines of our 
proof of Theorem \ref{thm: main} avoiding bipartite conversion?
\end{question}

\printbibliography
\end{document}